\newtheorem{lemma}{Lemma}
\newtheorem{lemma*}{Lemma}
\newtheorem{theorem}{Theorem}
\newtheorem*{theorem*}{Theorem}
\newtheorem*{conj*}{Conjecture}
\newtheorem{remark*}{Remark}
\newtheorem{cor}{Corollary}
\newtheorem*{cor*}{Corollary}
\newcommand*{\bfrac}[2]{\genfrac{}{}{0pt}{}{#1}{#2}}
\global\long\def\veps{\varepsilon}
\global\long\def\N{\mathbb{N}}
\global\long\def\C{\mathbb{C}}
\global\long\def\P{\mathbb{P}}
\global\long\def\Ac{{\mathcal{A}}}
\global\long\def\Hc{\mathcal{H}}
\global\long\def\Lc{\mathcal{L}}
\global\long\def\Pc{\mathcal{P}}
\global\long\def\fa{f_{\Ac}}
\global\long\def\om{\omega}
\global\long\def\po{p \equiv 1(\mmod 4)}
\global\long\def\pt{p \equiv 3(\mmod 4)}
\global\long\def\simA{\overset{A}{\sim}}
\DeclareMathOperator{\mmod}{mod}
\DeclareMathOperator{\Res}{Re}
\date{}
\begin{document}
\title[Intersections of binary quadratic forms in primes]{Intersections of binary quadratic forms in primes and the paucity phenomenon}
\author{Alisa Sedunova}
\address{Saint Petersburg State University}
\email{alisa.sedunova@phystech.edu}
\subjclass[2010]{11N37, 11N32, 11P21}

\begin{abstract}
\noindent The number of solutions to $a^2+b^2=c^2+d^2 \le x$ in integers is a well-known result, while if one restricts all the variables to primes Erd\H{o}s \cite{erdos} showed that only the diagonal solutions, namely, the ones with $\{a,b\}=\{c,d\}$ contribute to the main term, hence there is a paucity of the off-diagonal solutions. 
Daniel \cite{MR1833070} considered the case of $a,c$ being prime and proved that the main term has both the diagonal and the non-diagonal contributions.
Here we investigate the remaining cases, namely when only $c$ is a prime and when both $c,d$ are primes and, finally, when $b,c,d$ are primes by combining techniques of Daniel, Hooley and Plaksin.
\end{abstract}

\maketitle

\setcounter{tocdepth}{1}
\tableofcontents

\section{Introduction}
\noindent The paucity of positive solutions to Diophantine equation
$a^k+b^k=c^k+d^k$ outside of the diagonal given by $\{a,b\} = \{c,d\}$
was predicted for $k \ge 3$ and demonstrated by Hooley \cite{MR623719}. 
For $k=2$ the situation is very different. 
Indeed, the diagonal solutions to $a^2+b^2=c^2+d^2$ contribute only to the error term, while the off-diagonal ones contain the main term. 
When $a,b,c,d$ are all primes the situation changes drastically. 
Erd\H{o}s \cite{erdos} established that the quadric $a^2+b^2=c^2+d^2$ has almost all its prime solutions on the diagonal. 
The same phenomenon was later discovered for the prime solutions to the intersection of two quadrics by  Blomer and Br\"{u}dern \cite{MR2418801}, who proved that the main term is coming only from the diagonal solutions to $a^2+b^2=c^2+d^2=e^2+f^2$. 
Here we investigate positive solutions to $a^2+b^2=c^2+d^2$ when \emph{some} of $a,b,c,d$ are primes, namely, when only $c$ is a prime, and when both $c,d$ are primes and, finally, $b,c,d$ are primes.
We confirm that there is no paucity of the non-diagonal solutions in the first two cases. 

\noindent Let $\P$ denote the set of all primes and the letters $p,q,r$ always stand for elements of $\P$.
\noindent For $n \in \N$ define the following functions
\begin{align}
	\label{r_0def} r_0(n) &= \#\{ (a,b) \in \N^2 \colon n = a^2+b^2\}, \\
	\label{r_1def} r_1(n) &= \#\{ (a,p) \in \N \times \P \colon n=a^2+p^2\}, \\
	\label{r_2def} r_2(n) &= \#\{ (p,q) \in \P^2 \colon n=p^2+q^2\}.
\end{align}
\noindent For $i,j \in \{0,1,2\}$ consider the mean values

\begin{equation}
S_{i,j}(x) = \sum_{n \le x} r_i(n)r_j(n).
\end{equation} 

\noindent The asymptotic formulas for $S_{i,j}(x)$ can be interpreted in terms of the number of solutions to
$a^2+b^2=c^2+d^2$,
where $a,b,c,d \in \N$ and some of them are restricted to primes, depending on $i,j$.

\noindent In the present work we consider $S_{i,j}(x)$ with $i \neq j$; $i, j \in \{ 0, 1, 2\}$.

\begin{theorem} \label{r_0r_1}
As $x \to \infty$ we have that
\begin{equation}
	S_{0,1}(x) = \sum_{n \le x} r_0(n)r_1(n) = \frac{x}{2}+O\biggl(\frac{x \log \log x}{\log x}\biggr).
\end{equation}
\end{theorem}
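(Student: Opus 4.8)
The plan is to rewrite $S_{0,1}(x)$ as a sum over primes and to exploit the divisor structure of $r_0$. Writing $r(n)=\#\{(u,v)\in\Z^2:u^2+v^2=n\}=4(\mathbf{1}*\chi_{-4})(n)=4\sum_{d\mid n}\chi_{-4}(d)$ (Jacobi), and noting that $r_0(n)=\tfrac14 r(n)$ up to a correction supported on perfect-square $n$ (which contributes $O(x^{1/4})$ here), one gets, on grouping by the representation $n=a^2+p^2$ with $p$ prime,
\[
S_{0,1}(x)=\sum_{\substack{a\ge1,\ p\in\P\\ a^2+p^2\le x}}r_0(a^2+p^2)=\sum_{\substack{a\ge1,\ p\in\P\\ a^2+p^2\le x}}\ \sum_{d\mid a^2+p^2}\chi_{-4}(d)\ +\ O(x^{1/4}).
\]
Equivalently, $a^2+p^2=u^2+v^2$ says that $u+vi$ and $a+pi$ have the same norm in $\Z[i]$, and the inner sum counts the Gaussian-integer divisors of $a+pi$; the argument can also be run through the classical parametrisation of $u^2+v^2=a^2+b^2$.

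I would then invert the order of summation and apply Dirichlet's hyperbola method to the inner sum, so that — up to the standard adjustments (the inner sum vanishes for $n\equiv3\ (\mmod4)$; pass to the odd part when $n$ is even; square $n$ give a negligible term) — only divisors $d\le\sqrt x$ remain. The factor $\chi_{-4}(d)\rho(d)$ vanishes unless every prime factor of $d$ is $\equiv1\ (\mmod4)$, in which case $\rho(d):=\#\{t\ (\mmod d):t^2\equiv-1\}=2^{\omega(d)}$ and $\chi_{-4}(d)=1$; for such $d$, counting the pairs $(a,p)$ with $a^2+p^2\le x$ and $d\mid a^2+p^2$ amounts, for each prime $p\nmid d$, to counting $a\le\sqrt{x-p^2}$ in $\rho(d)$ residue classes modulo $d$. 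Summing over $a$ and then over primes $p$, via the prime number theorem in the form $\sum_{p\in\P,\ p\le\sqrt x}\sqrt{x-p^2}=\frac{\pi x}{2\log x}+O\!\bigl(\tfrac{x}{\log^2x}\bigr)$ (and, for the error terms involving $p$ in progressions to larger moduli, Bombieri–Vinogradov in the spirit of Daniel), yields a main contribution $\asymp\frac{\rho(d)}{d}\cdot\frac{x}{\log x}$ for each $d$, so that
\[
S_{0,1}(x)=\frac{\pi x}{\log x}\sum_{\substack{d\le\sqrt x\\ p\mid d\Rightarrow p\equiv1(4)}}\frac{2^{\omega(d)}}{d}\ +\ (\text{errors}).
\]

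The restricted Dirichlet series $\sum 2^{\omega(d)}d^{-s}=\prod_{p\equiv1(4)}\tfrac{1+p^{-s}}{1-p^{-s}}$ has a \emph{simple} pole at $s=1$, so the truncated sum equals $C\log\sqrt x+C'+o(1)$ with $C$ explicit; the logarithm cancels the $1/\log x$, and evaluating $C$ together with the precise parity, positivity and arithmetic constants (including $L(1,\chi_{-4})=\pi/4$) produces the main term $x/2$. It then remains to bound the errors: the $O(\rho(d))$ discrepancies from rounding the count of $a$, summed against $\pi(\sqrt x)$; the complementary (``large-$d$'') divisor range; the imprimitive configurations $p\mid d$; and the diagonal solutions $\{u,v\}=\{a,p\}$, of which there are $O(x/\log x)$ by the prime number theorem. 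Estimating the large-modulus part by Bombieri–Vinogradov and the remaining pieces by Hooley/Plaksin-type bounds for $\sum_{p\le\sqrt x}\tau(a^2+p^2)$ gives the error $O\!\bigl(\tfrac{x\log\log x}{\log x}\bigr)$, the $\log\log x$ arising from a Mertens sum over primes inside these estimates.

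The routine part is the count of $(a,p)$ modulo a fixed small $d$. The main obstacle is the error term: pushing the range of the modulus $d$ past what Siegel–Walfisz permits and controlling the ensuing sums over primes $p\le\sqrt x$ in arithmetic progressions to large moduli uniformly — this is where the Bombieri–Vinogradov input and the divisor-sum estimates of Hooley and Plaksin enter, and where the $\log\log x$ loss occurs; a secondary point is the clean separation of the genuinely off-diagonal solutions, achieved either through the hyperbola corrections or through the $\Z[i]$ parametrisation.
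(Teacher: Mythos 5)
Your skeleton coincides with the paper's: expand $r_0=1\ast\chi_4$, fold the divisor sum onto $d\lesssim\sqrt n$ via $\chi_4(d)=\chi_4(n/d)$, reduce $d\mid a^2+p^2$ to a congruence with $2^{\omega(d)}$ roots supported on moduli whose prime factors are all $\equiv 1\ (\mmod 4)$, evaluate the prime sum by the prime number theorem, and extract the main term from $\sum_{d\le y}2^{\omega(d)}f_{\Ac}(d)/d\sim\pi^{-1}\log y$ (the paper's Lemma \ref{2^om/n_onA}; your residue computation of the restricted Euler product is the same one, and the constant $x/2$ comes out correctly).

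The genuine gap is the equidistribution input you name. The moduli $d$ run up to $\sqrt x/\log^Ax$ while the primes run only up to $\sqrt x$; if, as your final paragraph asserts, the key step is controlling primes $p\le\sqrt x$ in progressions modulo $d$, then Bombieri--Vinogradov reaches only $d\le x^{1/4-\veps}$, and because $\sum_{d\le y}2^{\omega(d)}f_{\Ac}(d)/d\sim\pi^{-1}\log y$ the uncovered range $x^{1/4}<d\le\sqrt x/\log^Ax$ carries \emph{half} of the main term --- it cannot be discarded or reached by BV. The paper's substitute is the Barban--Davenport--Halberstam theorem \eqref{BDH}, an $L^2$ bound over individual residue classes valid for moduli up to $t/\log^Bt$, combined with Cauchy--Schwarz against the (logarithmically small) second moment of the root-counting function $\#\{u:u^2+v^2\equiv0\ (\mmod d)\}$; this is the missing ingredient. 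Two further points need repair even if you adopt the order of summation you actually describe for the main term (fix $p\nmid d$, count $a\le\sqrt{x-p^2}$ in $2^{\omega(d)}$ classes, which in fact requires no information about primes in progressions): the hyperbola cutoff must be at $\sqrt n=\sqrt{a^2+p^2}$ rather than at $\sqrt x$, since with cutoff $\sqrt x$ the doubling step miscounts the divisors in $(\sqrt n,\sqrt x]$; and the "middle" divisors $d\asymp\sqrt n$ (within a factor $\log^Ax$) must be bounded separately --- the paper does this with Brun--Titchmarsh and Lemma \ref{2^om/n_onA}, and it is precisely this range, together with the truncation of $\sum_d 2^{\omega(d)}f_{\Ac}(d)/d$ at $\sqrt x/\log^Ax$, that produces the $\log\log x$ in the error term, not a Mertens sum over primes.
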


\noindent As a corollary of the theorem above we get the following.
\begin{cor} \label{dispr0r1}
For any $c>0$ the dispersion satisfies
\begin{equation}
	\sum_{n \le x} \biggl( r_1(n) - \frac{cr_0(n)}{\log n }\biggr)^2 \asymp \frac{x}{\log x}.
\end{equation}
\end{cor}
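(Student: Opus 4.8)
The plan is to expand the square and reduce the statement to three mean-value inputs. Writing $D(x)$ for the left-hand side and setting
\begin{equation*}
A(x)=\sum_{n\le x} r_1(n)^2,\qquad B(x)=\sum_{n\le x}\frac{r_0(n)r_1(n)}{\log n},\qquad C(x)=\sum_{n\le x}\frac{r_0(n)^2}{\log^2 n},
\end{equation*}
we have $D(x)=A(x)-2cB(x)+c^2C(x)$. The cross sum $B(x)$ is handled by Theorem~\ref{r_0r_1} and Abel summation: from $S_{0,1}(t)=t/2+O(t\log\log t/\log t)$, summing by parts against the weight $1/\log t$ gives $B(x)=x/(2\log x)+O\!\left(x\log\log x/\log^2 x\right)$. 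For $C(x)$ I would invoke the classical evaluation of the number of solutions of $a^2+b^2=c^2+d^2\le t$, namely $\sum_{n\le t}r_0(n)^2=\kappa_0\,t\log t+O(t)$ with an explicit $\kappa_0>0$; partial summation against $1/\log^2 t$ then yields $C(x)=\kappa_0\,x/\log x+O\!\left(x/\log^2 x\right)$. Finally $A(x)=S_{1,1}(x)$ counts solutions of $a^2+p^2=b^2+q^2\le x$ with one prime summand on each side, which is exactly Daniel's configuration~\cite{MR1833070}; his theorem provides $A(x)=\kappa_1\,x/\log x+o\!\left(x/\log x\right)$ with an explicit $\kappa_1>0$ coming from both the diagonal and off-diagonal terms.

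Combining the three inputs gives $D(x)=\bigl(\kappa_0 c^2-c+\kappa_1\bigr)\dfrac{x}{\log x}+o\!\left(\dfrac{x}{\log x}\right)$, so the upper bound $D(x)\ll_c x/\log x$ is immediate and the whole matter reduces to showing that the quadratic $Q(c)=\kappa_0 c^2-c+\kappa_1$ is strictly positive for every $c>0$, i.e.\ that $4\kappa_0\kappa_1>1$. This is precisely the assertion that Cauchy--Schwarz is not asymptotically tight for the sequences $(r_1(n))_n$ and $(r_0(n)/\log n)_n$; heuristically this must hold because the number of $n\le x$ that are sums of two squares is $\asymp x/\sqrt{\log x}$ while only $\ll x/\log x$ of them admit a representation with a prime summand, so $r_1$ vanishes on almost all of the support of $r_0$ yet is not identically zero.

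To make this rigorous without a delicate analysis, note that for every fixed $c$, completing the square in $c$ gives $D(x)=\bigl(A(x)-B(x)^2/C(x)\bigr)+C(x)\bigl(c-B(x)/C(x)\bigr)^2$, and the first bracket is $\ge 0$ by Cauchy--Schwarz; since $C(x)\asymp x/\log x$ and $B(x)/C(x)\to 1/(2\kappa_0)$, for every $c\neq 1/(2\kappa_0)$ the second term alone already forces $D(x)\gg_c x/\log x$, and this uses only Theorem~\ref{r_0r_1} and the classical formula for $\sum r_0(n)^2$. The single critical value $c=1/(2\kappa_0)$ is where the genuine content lies: there $D(x)=\bigl(\kappa_1-\tfrac{1}{4\kappa_0}\bigr)\dfrac{x}{\log x}+o\!\left(\dfrac{x}{\log x}\right)$, and I expect the main obstacle to be the verification that Daniel's main-term constant $\kappa_1$ strictly exceeds $1/(4\kappa_0)$, carried out by substituting the explicit values of $\kappa_0$ from the classical count and of $\kappa_1$ from~\cite{MR1833070}.
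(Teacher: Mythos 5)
Your proposal is correct and follows essentially the same route as the paper, whose proof is exactly the stated combination of \eqref{s11}, Theorem \ref{r_0r_1}, \eqref{s00} and partial summation: expanding the square and summing by parts gives $D(x)=\bigl(\kappa_0c^2-c+\kappa_1\bigr)x/\log x+o(x/\log x)$ with $\kappa_0=1/4$ from \eqref{s00} and $\kappa_1=\pi/2+9/4$ from \eqref{s11}. The final positivity check you defer is immediate, since $\kappa_0c^2-c+\kappa_1=(c/2-1)^2+(\pi/2+9/4-1)>0$ for all $c$.
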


\noindent The proof of the Corollary is a byproduct of \eqref{s11}, Theorem \ref{r_0r_1}, equation \eqref{s00} and partial summation. 
The corollary implies that $cr_0(n)/\log n$ is not a good enough approximation for considerably more complicated function $r_1(n)$.

\begin{theorem} \label{r_0r_2}
As $x \to \infty$ we have
\begin{equation}
	S_{0,2}(x) = \sum_{n \le x} r_0(n)r_2(n) = \frac{12G}{\pi^2} \frac{x}{\log x} + O\biggl(\frac{x \log \log x}{\log^2 x}\biggr),
\end{equation}
where $G$ is the Catalan constant given by
\begin{equation}
	G=L(2, \chi_4) = \sum_{k=0}^\infty \frac{(-1)^k}{(2k+1)^2} = 0.915 \ldots
\end{equation}
\end{theorem}

\begin{theorem} \label{r_1r_2}
As $x \to \infty$ we have that
\begin{equation}
	\frac{2 \pi x}{\log^2 x} \ll S_{1,2}(x) = \sum_{n \le x} r_1(n)r_2(n) \ll \frac{x}{\log^2 x} (\log \log x)^2.
\end{equation}
\end{theorem}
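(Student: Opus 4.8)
\noindent I would prove the lower and upper bounds by rather different arguments.

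\emph{Lower bound.} Recall that $S_{1,2}(x)$ counts quadruples $(a,p,q,r)$ with $a\in\N$, $p,q,r\in\P$ and $a^{2}+p^{2}=q^{2}+r^{2}\le x$. Retaining only the diagonal quadruples, i.e.\ those with $(a,p)=(q,r)$ or $(a,p)=(r,q)$ --- both assignments being admissible since $q,r$ are prime --- one gets two families, each of size $N(x):=\#\{(q,r)\in\P^{2}\colon q^{2}+r^{2}\le x\}$, overlapping only when $q=r$, which occurs for $O(\sqrt x/\log x)$ quadruples; hence $S_{1,2}(x)\ge 2N(x)-O(\sqrt x/\log x)$. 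By the prime number theorem and partial summation,
\[
N(x)=\sum_{\substack{q\in\P\\ q\le\sqrt x}}\pi\!\bigl(\sqrt{x-q^{2}}\bigr)\sim\int_{0}^{\sqrt x}\frac{2\sqrt{x-t^{2}}}{\log(x-t^{2})\,\log t}\,dt\sim\frac{\pi x}{\log^{2}x},
\]
the last step via $t=\sqrt x\sin\theta$ (the ranges of $\theta$ near $0$ and $\pi/2$ contributing a lower order). This already gives $S_{1,2}(x)\ge(2\pi+o(1))x/\log^{2}x$.

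\emph{Upper bound, set-up.} Split $S_{1,2}(x)=D(x)+E(x)$ into the diagonal part $D(x)\ll x/\log^{2}x$ (by the computation above) and the off-diagonal part $E(x)$; it remains to show $E(x)\ll x(\log\log x)^{2}/\log^{2}x$. After discarding the $O(x^{1/2+\veps})$ quadruples with $q=r$ or with $2\in\{p,q,r\}$, and using the symmetry $q\leftrightarrow r$ to assume $a>q$ (forcing $r>p$), I would follow Daniel's reduction of such quadric equations \cite{MR1833070}: factoring $a^{2}-q^{2}=r^{2}-p^{2}$ as $(a-q)(a+q)=(r-p)(r+p)$, setting $g=\gcd(a-q,r-p)$, $a-q=g\alpha$, $r-p=g\beta$ with $\gcd(\alpha,\beta)=1$, one finds $\alpha\mid r+p$, so with $r+p=\alpha\gamma$ also $a+q=\beta\gamma$, whence
\[
2a=g\alpha+\beta\gamma,\qquad 2q=\beta\gamma-g\alpha,\qquad 2r=g\beta+\alpha\gamma,\qquad 2p=\alpha\gamma-g\beta,
\]
with $g,\alpha,\beta,\gamma\ge1$, $\gcd(\alpha,\beta)=1$, $(\alpha,\beta)\ne(1,1)$ (this last condition being precisely off-diagonality here), and $g\alpha,g\beta\le\sqrt x$, $\alpha\gamma,\beta\gamma\le 2\sqrt x$. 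Thus $E(x)$ is at most a bounded multiple of the number of such $(g,\alpha,\beta,\gamma)$ for which $\alpha\gamma-g\beta$, $\beta\gamma-g\alpha$, $\alpha\gamma+g\beta$ are each twice a prime.

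\emph{Upper bound, sieving.} Now fix $g,\beta,\gamma$ with $g\beta\le\sqrt x$, $\beta\gamma\le2\sqrt x$. The three quantities above become linear forms in $\alpha$ (leading coefficients $\tfrac{\gamma}{2},-\tfrac{g}{2},\tfrac{\gamma}{2}$), pairwise distinct once $(\alpha,\beta)\ne(1,1)$ and $q\ne r$, with $\alpha$ confined to an interval $I=I(g,\beta,\gamma)$ of length $\ll\min(\sqrt x/g,\sqrt x/\gamma)$ forced by positivity and the size constraints. An upper-bound (Selberg) sieve, in the form used by Hooley \cite{MR623719} for such linear systems, gives --- when $|I|\ge(\log x)^{10}$ say ---
\[
\#\{\alpha\in I\colon \alpha\gamma-g\beta,\ \beta\gamma-g\alpha,\ \alpha\gamma+g\beta\ \text{all twice a prime}\}\ \ll\ \mathfrak{S}(g,\beta,\gamma)\,\frac{|I|}{\log^{3}x},
\]
where $\mathfrak{S}(g,\beta,\gamma)$ is the singular series of the system. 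Summing over $g,\beta,\gamma$, the volume bound $\sum_{g,\beta,\gamma}|I(g,\beta,\gamma)|\ll x\log x$ (from $\sum_{g,\gamma\le\sqrt x}\max(g,\gamma)^{-2}\ll\log x$ after summing the admissible $\beta\ll\sqrt x/\max(g,\gamma)$), combined with an average bound for the singular series, $\sum_{g,\beta,\gamma}\mathfrak{S}(g,\beta,\gamma)\,|I(g,\beta,\gamma)|\ll x\log x\,(\log\log x)^{2}$, then yields $E(x)\ll x(\log\log x)^{2}/\log^{2}x$; the short-interval ranges $|I|<(\log x)^{10}$, which force $\max(g,\gamma)$ within $(\log x)^{10}$ of $\sqrt x$, are sparse and are bounded trivially.

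\emph{The main obstacle} will be this last step: running the Selberg sieve with enough uniformity in the possibly large coefficients $\gamma/2$, $g/2$ and in the level of distribution, disposing of the short-interval r\'egime where the three-prime saving degrades, and --- above all --- bounding on average over the relevant box the singular series $\mathfrak{S}(g,\beta,\gamma)$ of the linear system. It is exactly here that the factor $(\log\log x)^{2}$ over the expected order $x/\log^{2}x$ is incurred, and here that the analysis of representations by $a^{2}+b^{2}$ in the manner of Hooley and Plaksin is brought to bear.
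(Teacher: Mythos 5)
Your lower bound is fine and is essentially the paper's: counting the two families of diagonal quadruples $(q,r,q,r)$ and $(r,q,q,r)$ amounts to $2\sum_{n\le x}r_2(n)$, which by \eqref{s2} (equivalently, by $S_{1,2}\ge S_{2,2}$ and \eqref{r2^2viar2}, which is how the paper phrases it) gives $2\pi x/\log^2 x$. Your parametrization of the off-diagonal solutions is also essentially the one the paper uses (the paper follows Rieger's $(d,t,n_1,n_2)$ version, producing the same four bilinear forms $g\alpha\pm\beta\gamma$, $g\beta\pm\alpha\gamma$ up to relabelling), and the overall plan --- sieve the three forms that must be prime, then sum over the remaining parameters --- is the paper's plan.

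The gap is in the sieving step. You fix $g,\beta,\gamma$ and run a one-variable Selberg sieve over $\alpha\in I$ with $|I|\ll\sqrt{x}/\max(g,\gamma)$, claiming a saving of $\log^{3}x$ whenever $|I|\ge(\log x)^{10}$. A one-dimensional sieve over an interval of length $|I|$ has level of distribution at most $|I|$, so it can only save $(\log|I|)^{3}$, not $(\log x)^{3}$. This matters because the volume is spread evenly over the dyadic ranges of $M=\max(g,\gamma)$: each block $M\le\max(g,\gamma)<2M$ contributes $\asymp x$ lattice points, and the sieve saving on that block is only $(\log(\sqrt{x}/M))^{3}$. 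Summing $x/(k\log 2)^{3}$ over $k=\log_2(\sqrt{x}/M)\ge 1$ gives $\asymp x$, not $x/\log^{2}x$; even after discarding $|I|<(\log x)^{10}$ the remainder is $\asymp x/(\log\log x)^{2}$, and the discarded regime itself contains $\asymp x\log\log x$ lattice points, so it cannot be ``bounded trivially''. The paper avoids this in two ways you omit: first, before parametrizing, it excises the degenerate pairs $(p,r),(p,q)\in\Pc(x)$ (those with $r$ small, $p-r$ small, or $p$ near $\sqrt{x}$) at a cost of $O(x/\log^{3}x)$ via Cauchy--Schwarz and $\sum\tau^{2}(p^2-r^2)$, which forces $t\asymp d$ (your $\gamma\asymp g$) up to $\log^{10}x$ factors and collapses the volume sum $\sum G(t,d)/(td)$ to $\log x\cdot\log\log x$; second, it sieves in the two complementary variables $(n_1,n_2)$ jointly rather than in one of them. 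Your singular-series bookkeeping is also slightly off: in the paper one factor of $\log\log x$ comes from the restricted volume sum and only one from the singular series $G(t,d)\ll\Psi(z)/z\ll\log\log z$, rather than both coming from a singular-series average. So the skeleton is right, but as written the central estimate fails and the ingredient that rescues it --- the $\Pc(x)$ excision --- is missing.
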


Notice that the circle method struggles with mean values as in theorems \ref{r_0r_1}, \ref{r_0r_2}, \ref{r_1r_2} mainly because of the lack of variables and their primality causing major arcs to be too small.  
Instead, we develop other techniques for these mean values.
We take an approach of Hooley \cite{MR0088512}, who used it to prove the Hardy-Littlewood problem of representing $n=p+a^2+b^2$ conditionally on GRH.
This requires careful preparation for an application of the Barban-Davenport-Halberstam theorem (see \eqref{BDH}).

One could try to consider another multiplicative function in place of $r_0(n)$. 
In that direction, for instance, very recently Li \cite{MR4054578} proved an asymptotic formula for the sum of $r_2(n) \tau(n+1)$.
The outline of the present proof is quite general and can be adapted to some other multiplicative functions instead of $r_0$ as well as some of the shifts provided that we have the corresponding Bombieri-Vinogradov type theorem.



Let us survey the well-known results that we are going to use throughout the paper. 
It is well known that 
\begin{equation}
	\sum_{n \le x}r_0(n) = \frac{\pi x}{4}-\sqrt{x}+O(x^\alpha),
\end{equation}
where $\alpha<1/3$ (see \cite{MR1199067} for improvements on $\alpha$) and
\begin{equation} \label{s00}
	S_{0,0} (x) =  \sum_{n \le x} r_0^2(n) = \frac{x}{4} (\log x+C)+O(x^{1/2+\veps}),
\end{equation}
where $C$ is an explicit constant
(this can be done by computing the corresponding Dirichlet series and then applying standard complex analytic techniques).

Further, Daniel \cite{MR1833070} considered $S_{1,1}(x)$ and proved that
\begin{equation} \label{s11}
	S_{1,1}(x) = \sum_{n \le x} r_1^2(n) = \biggl(\frac{\pi}{2}+\frac{9}{4}\biggr)\frac{x}{\log x}+O\biggl( \frac{x (\log \log x)^2}{\log^2 x}\biggr),
\end{equation}
while by the prime number theorem we have
\begin{equation}  \label{s1}
	\sum_{n \le x}r_1(n) = \frac{\pi}{2} \frac{x}{\log x}+O\biggl(\frac{x}{\log^2 x}\biggr).
\end{equation}
Notice that \eqref{s11} improves on Rieger \cite{MR0229603}, who essentially proved $S_{1,1}(x) \ll x/ \log x$ via sieve techniques.
Here the part with $\pi/2$ in the main term comes from the diagonal solutions to $a^2+p^2=b^2+q^2$, namely, the ones with $a=b$, $p=q$, while $9/4$ is coming from the off-diagonal ones and is the main object to study in \cite{MR1833070}. It is also showed in \cite[Theorem 2]{MR1833070} that the number of $n \le x$ with $r_1(n)>0$ is asymptotic to $\pi x/(2\log x)$, which, together with \eqref{s1}, implies that the $9/4$ in \eqref{s11} is coming from a very thin set of extremely large values of $r_1$.

Erd\H{o}s \cite{erdos} found that 
\begin{equation} \label{s22}
	S_{2,2} (x) = \frac{2 \pi x}{\log^2 x}+O\biggl( \frac{x \log \log x}{\log^3 x}\biggr),
\end{equation}
while it was shown by Plaksin \cite[Lemma 11]{MR616225} that
\begin{equation} \label{s2}
	\sum_{n \le x} r_2(n) = \frac{ \pi x}{\log^2 x}+O\biggl(\frac{x \log \log x}{\log^3 x}\biggr)
\end{equation}
(see also \cite[Lemma 3.3]{MR4054578}).
Actually Rieger \cite{MR0229603} showed that
\begin{equation} \label{r2^2viar2}
	S_{2,2} (x) = \sum_{n \le x} r_2^2(n)= 2 \sum_{n \le x} r_2(n)+ O \biggl(\frac{x}{\log^3 x} \biggr)
\end{equation}

More recently, Blomer and Br\"{u}dern  \cite{MR2418801} obtained an asymptotic for the third moment, namely, they proved
\begin{equation} \label{r_2^3}
	\sum_{n \le x} r_2^3(n) = \frac{4 \pi x}{\log^2 x}+O\biggl( \frac{x(\log \log x)^6}{\log^3 x}\biggr),
\end{equation}
hence the main term is coming only from the diagonal solutions. They estimated the contribution of the off-diagonal solutions to $p_1^2+p_2^2=p_3^2+p_4^2=p_5^2+p_6^2 \le x$ using suitable representations of $p_i, i=1,\ldots,6$ over the Gaussian integers (where the factorization of that equation in $\mathbb{Z}[i]$ is being used) and then applied the upper bound sieve.


On noticing that $r_2(n)=1$ only when for $\sqrt{x}$ values up to $x$ and combining \eqref{s2} with \eqref{s22}, we obtain
\begin{equation} \label{r2>0}
	\sum_{\substack{n \le x \\ r_2(n)>0}} 1 = \frac{\pi}{2} \frac{x}{\log^2 x}+O \biggl( \frac{x \log \log x}{\log^3 x}\biggr).
\end{equation}
However, the same trick fails when dealing with $r_1$ due to the non-trivial contribution of the off-diagonal solutions.

First, we point out some rather simple bounds for $S_{i,j}(x)$.
Applying trivially $0 \le r_2(n) \le r_1(n) \le r_0(n)$ and using Cauchy-Schwarz, we find that

\begin{equation}
	\begin{split}
		{x}/{\log^2 x} \ll S_{2,2} (x) \le &S_{0,2}(x) \le S_{0,0}(x)^{1/2} S_{2,2}(x)^{1/2} \ll {x}/{\sqrt{\log x}},\\
		{x}/{\log x} \ll S_{1,1} (x) \le &S_{0,1}(x) \le S_{0,0}(x)^{1/2} S_{1,1}(x)^{1/2} \ll x,\\
		{x}/{\log^2 x} \ll S_{2,2} (x) \le &S_{1,2}(x) \le S_{1,1}(x)^{1/2} S_{2,2}(x)^{1/2} \ll {x}/{\log^{3/2}x},
	\end{split}
\end{equation} 
where we  used \eqref{s00}, \eqref{s11} and \eqref{s22}. 

Theorem \ref{r_0r_2} implies that there is no paucity of the off-diagonal solutions, so they contribute to the main term, while the diagonal ones are $\ll x/\log^2 x$ and lie in the error term. 

Similarly, in Theorem \ref{r_0r_1} the main term comes from the non-diagonal contribution, while the diagonal one, namely, $S_{1,1}(x)$ goes to the error term.

In the case of $S_{1,2}(x)$ we do not have any information regarding the paucity of the off-diagonal solutions, getting an asymptotic formula for $r_1(n)r_2(n)$ seems to be quite hard since it involves non-trivial estimates for $r_1^2(n)$ over the support of $r_2(n)$. 


\section{Notations}
\noindent Let $\chi_4$ be the non-principal Dirichlet character modulo $4$. 
Recall that by \eqref{r_0def} we have $r_0(n) = \sum_{d|n}\chi_4(d)$, hence $r_0(n)$ is multiplicative.
In what follows $1_X$ stands for the indicator function of $X$, $\omega(n)$ is the number of distinct prime factors of $n$, $\varphi(n)$ is the Euler totient function, $\tau(n)$ is the number of divisors of $n$.
We recall the well-known fact that $n$ can be represented as a sum of two squares if and only if all prime divisors of $n$ that are equal to $3 (\mmod 4)$ appear in the prime factorization of $n$ with even powers.

\noindent Denote by $K$ the Landau-Ramanujan constant given by the Euler product
\begin{equation} \label{landaucst}
	K=\frac{\pi}{4} \prod_{p \equiv 1(\mmod 4)} \Big(1-\frac{1}{p^2}\Big)^{1/2} = \frac{1}{\sqrt{2}} \prod_{p \equiv 3(\mmod 4)} \Big(1-\frac{1}{p^2}\Big)^{-1/2} = 0.764223653...
\end{equation}

\noindent Define an indicator function of the representation as a sum of two squares as follows
\begin{equation} \label{bdef}
	b(n)=\begin{cases}
		1, & n=a^2+b^2 \text{ for some }a,b \in \N,\\
		0, &\text{otherwise.}
	\end{cases}
\end{equation}

\noindent Then, the classical result of Landau \cite{landau} states
\begin{equation} \label{landau}
	\sum_{n \leq x} b(n) = \frac{Kx}{\sqrt{\log x}}\biggl(1+O \Big( \frac{1}{\log x}\Big) \biggr).
\end{equation}

\noindent Denote by $\Ac$ the set of natural numbers $n \in \N$ with all prime divisors of $n$ satisfy $p \equiv 1(\mmod 4)$
\begin{equation}
	\Ac=\{n \in \N: \quad p|n \to p \equiv 1(\mmod 4)\}.
\end{equation}

\noindent Clearly, $1 \in \Ac$. Let $f_{\Ac}(n)$ be the characteristic function of $\Ac$
\begin{equation}
	f_{\Ac}(n) = \begin{cases}
		1, &n\in \Ac,\\
		0, &n \notin \Ac.
	\end{cases}
\end{equation}
\noindent Notice that $f_{\Ac}(n)$ is a totally multiplicative function with values in $\{0,1\}$, $f_{\Ac}(p^k)=1$ for $p \equiv 1(\mmod 4)$ and is zero otherwise.
Notice also that $\Ac$ has the same density as the set of numbers that can be represented as a sum of two squares.
Indeed, by the standard application of the convolution method it can be shown that as $x \to \infty$ we have
\begin{equation}
	\Ac(x)= \#\{n \le x, \, n\in \Ac \} = \frac{1}{4K} \frac{x}{\sqrt{\log x}}+O\biggl( \frac{x}{(\log x)^{3/2}}\biggr).
\end{equation}

\section{Auxiliary lemmas}
\noindent We start with some technical estimates related to our initial set $\Ac$.
\begin{lemma} \label{2^om/n_onA}
For $x \to \infty$, we have
\begin{equation}
\sum_{n \leq x}\frac{2^{\omega(n)}}{n} f_{\Ac}(n) = \frac{1}{\pi} \log x +O(1).
\end{equation}
\end{lemma}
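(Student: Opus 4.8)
The plan is to identify the Dirichlet series of the multiplicative function $g(n):=2^{\omega(n)}f_{\Ac}(n)$, rewrite it as $\zeta(s)$ times a harmless factor, and then read off the partial sum $\sum_{n\le x}g(n)/n$ by an elementary convolution with the classical lattice‑point count for $r_0$. Since $g(p^k)=2$ for $p\equiv 1(\mmod 4)$ and $k\ge 1$ while $g(p^k)=0$ otherwise, the Euler product gives
\[
\sum_{n\ge 1}\frac{g(n)}{n^s}=\prod_{p\equiv 1(\mmod 4)}\frac{1+p^{-s}}{1-p^{-s}}=\frac{\zeta(s)\,L(s,\chi_4)}{\zeta(2s)\,(1+2^{-s})},
\]
the last identity being a routine comparison of local factors, with the contributions at the exceptional primes $p=2$ and $p\equiv 3(\mmod 4)$ cancelling against those of $\zeta(2s)^{-1}$ and $(1+2^{-s})^{-1}$. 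In particular $g=r_0*k$ in the sense of Dirichlet convolution, where $\sum_n r_0(n)n^{-s}=\zeta(s)L(s,\chi_4)$ and $\sum_n k(n)n^{-s}=\bigl(\zeta(2s)(1+2^{-s})\bigr)^{-1}$; explicitly $k$ is supported on the sparse set $\{m^2 2^j\}$ with $k(m^2 2^j)=\mu(m)(-1)^j$, so $\sum_n k(n)n^{-s}$ converges absolutely in $\Re s>1/2$.

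I would then use the classical estimate $\sum_{a\le y}r_0(a)=\tfrac{\pi}{4}y+O(\sqrt y)$ stated above, which by partial summation yields $\sum_{a\le y}r_0(a)/a=\tfrac{\pi}{4}\log y+C_0+O(y^{-1/2})$ for $y\ge 1$ and an absolute constant $C_0$. Interchanging the order of summation in $g=r_0*k$,
\[
\sum_{n\le x}\frac{g(n)}{n}=\sum_{b\le x}\frac{k(b)}{b}\sum_{a\le x/b}\frac{r_0(a)}{a}=\sum_{b\le x}\frac{k(b)}{b}\Bigl(\frac{\pi}{4}\log\frac{x}{b}+C_0+O\bigl((b/x)^{1/2}\bigr)\Bigr).
\]
The main term is $\tfrac{\pi}{4}\log x\sum_{b\ge 1}k(b)/b=\tfrac{\pi}{4}\cdot\tfrac{4}{\pi^2}\log x=\tfrac1\pi\log x$, using $\sum_{b\ge 1}k(b)/b=\bigl(\zeta(2)\cdot\tfrac32\bigr)^{-1}=4/\pi^2$; everything else is $O(1)$, since completing the absolutely convergent series $\sum_b k(b)/b$ and $\sum_b k(b)(\log b)/b$ costs $O(x^{-1/2+\veps}\log x)$, the term $C_0\sum_b k(b)/b$ is a bounded constant, and the error term is $\ll x^{-1/2}\sum_{b\le x}|k(b)|b^{-1/2}\ll x^{-1/2+\veps}$ because $k$ lives on $\{m^2 2^j\}$. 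This delivers $\sum_{n\le x}g(n)/n=\tfrac1\pi\log x+O(1)$, in fact with error $O(x^{-1/2+\veps})$ beyond an additive constant.

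The step that requires care is essentially bookkeeping, in two places. First, the Euler‑product identity must be checked with the correct exceptional factors, because an error there would corrupt the constant; one can sanity‑check it via $\operatorname{Res}_{s=1}\sum_n g(n)n^{-s}=\operatorname{Res}_{s=1}\zeta(s)\cdot L(1,\chi_4)/\bigl(\zeta(2)\cdot\tfrac32\bigr)=(\pi/4)/(\pi^2/4)=1/\pi$, consistent with the claimed $\tfrac1\pi\log x$. Second, collapsing all the remainder terms to $O(1)$ — rather than merely to $o(\log x)$ — relies on the absolute convergence of $\sum_n k(n)n^{-s}$ in $\Re s>1/2$, i.e.\ on the sparsity of the support of $k$; without this one would still recover the right leading term but only a weaker error. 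As an alternative to the convolution, one could feed $\sum_n g(n)n^{-s}=\zeta(s)H(s)$, with $H(s)=L(s,\chi_4)/\bigl(\zeta(2s)(1+2^{-s})\bigr)$ holomorphic in $\Re s>1/2$, into a Perron/Selberg--Delange argument, shifting the contour past $\Re s=1$ and invoking the convexity bound for $L(s,\chi_4)$ on vertical lines; this again produces $\tfrac1\pi\log x+O(1)$ with a power‑saving error term.
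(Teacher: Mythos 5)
Your proof is correct, and it takes a genuinely different route from the paper. Both arguments begin by identifying the same Euler product, $\sum_n 2^{\omega(n)}f_{\Ac}(n)n^{-s}=\zeta(s)L(s,\chi_4)\zeta(2s)^{-1}(1+2^{-s})^{-1}$, which is exactly the paper's identity \eqref{genF}. From there the paper verifies the hypotheses of the Levin--Fainleib mean-value theorem for the non-negative multiplicative function $f(n)=2^{\omega(n)}f_{\Ac}(n)/n$ (checking that $\sum_{w<p^\nu\le Q}f(p^\nu)\log p^\nu=\log(Q/w)+O(1)$ and that the bilinear prime-power sum is bounded), and reads off the constant $C=1/\pi$ by comparing with $F(s)$ at $s\to 1$. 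You instead peel off the factor $\zeta(s)L(s,\chi_4)$, i.e.\ write $2^{\omega}f_{\Ac}=r_0\ast k$ with $k$ supported on the sparse set $\{m^2,2m^2\}$, and convolve against the elementary lattice-point estimate $\sum_{a\le y}r_0(a)=\tfrac{\pi}{4}y+O(\sqrt y)$. This is more hands-on and in fact yields slightly more than the lemma asks for (the $O(1)$ becomes an explicit constant plus a power-saving tail), at the cost of having to track the exceptional local factors by hand; the paper's route is shorter once one is willing to quote Levin--Fainleib as a black box. One cosmetic remark: your explicit formula $k(m^2 2^j)=\mu(m)(-1)^j$ is ambiguous when $m$ is even (the representation $n=m^2 2^j$ need not be unique); the clean statement is that $k$ is multiplicative with $k(p^2)=-1$ for odd $p$, $k(2)=-1$, and $k=0$ on all other prime powers. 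Since your argument only uses $|k(n)|\le 1$ together with the sparsity of the support to get absolute convergence of $\sum_b k(b)b^{-\sigma}$ for $\sigma>1/2$, this does not affect the proof.
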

The analogous estimate without $f_\Ac(n)$ is quite standard, namely, we have
\begin{equation} \label{2^om/n}
\sum_{n \leq x}\frac{2^{\omega(n)}}{n}  = \frac{3}{\pi^2} \log^2 x +O(\log x).
\end{equation}
We expect to get some power of $\log$ saving if we sum over $n \in \Ac$ instead of integers. 
This is one of the two main steps that allow us to obtain the main term of the correct order of magnitude in Theorem \ref{r_0r_2}.
Unfortunately, it appears to be difficult to modify the proof of \eqref{2^om/n} to the one with $f_\Ac$ and we proceed by the means of the celebrated Levin-Fainleib theorem \cite{MR0229600}.

\begin{proof}
Define a multiplicative function $f(n) = 2^{\om(n)} f_{\Ac}(n) /n$ and write its Dirichlet series as
\begin{equation}
F(s) = \sum_{n=1}^\infty \frac{2^{\omega(n)} f_{\Ac}(n)}{n^s} = \prod_{\po} \biggl(1+\frac{2}{p^s}+\frac{2}{p^{2s}}+\ldots \biggr) = \prod_{\po} \biggl( \frac{p^s+1}{p^s-1} \biggr).
\end{equation}
On defining an $L$--function attached to $\chi_4$ by
\begin{equation}
L(s,\chi_4) = \prod_{\po}\biggl(1-\frac{1}{p^s}\biggr)^{-1} \prod_{\pt}\biggl(1+\frac{1}{p^s}\biggr)^{-1}
\end{equation}
we can rewrite $F(s)$ as
\begin{equation} \label{genF}
F(s) =  \prod_{\po} \biggl( \frac{p^s+1}{p^s-1} \biggr) = \frac{L(s,\chi_4) \zeta(s)}{\zeta(2s)} \biggl(1+\frac{1}{2^s}\biggr)^{-1}.
\end{equation}
Further, for $Q >w \geq 1$ we have
\begin{equation}
\sum_{\bfrac{p \geq 2}{\bfrac{\nu \geq 1}{w<p^\nu \leq Q}}} f(p^\nu) \log p^\nu = 2 \sum_{\bfrac{\po}{w<p \leq Q}} \frac{\log p}{p} +2\nu \sum_{\bfrac{p \geq 2, \po}{\bfrac{\nu \geq 2}{w<p^\nu \leq Q}}} \frac{\log p}{p^\nu} = \log \frac{Q}{w}+O(1)
\end{equation}
and
\begin{equation}
\sum_{p \geq 2} \sum_{\nu, k \geq 1} f(p^k)f(p^\nu) \log p^\nu = \sum_{\bfrac{p \geq 2}{\po}} \sum_{\nu,k \geq 1} \frac{4 \log p^\nu}{p^{k+\nu}} \ll \sum_{p} \frac{\log p}{p^2} < \infty.
\end{equation}
Hence one can apply \cite[Theorem 21.1]{MR2493924} with $\kappa=1$ and obtain
\begin{equation}
\sum_{n \leq x} f(n) = C \log x+O(1), \quad C=\frac{1}{\Gamma(2)} \prod_p \biggl( \Big(1-\frac{1}{p}\Big) \sum_{\nu \geq 0} f(p^\nu) \biggr).
\end{equation}
Comparing with \eqref{genF} we have
\begin{equation}
C= \frac{L(1,\chi_4)}{\zeta(2)} \biggl(1+\frac{1}{2}\biggr)^{-1} = \frac{1}{\pi},
\end{equation}
where we used that $L(1,\chi_4) = \pi/4$, $\zeta(2)=\pi^2/6$.
\end{proof}

\begin{lemma} \label{2^om/phi(n)_onA}
For $x \to \infty$, we have
\begin{equation}
\sum_{n \leq x}\frac{2^{\omega(n)}}{\varphi(n)} f_{\Ac}(n) = \frac{12 G}{\pi^3} \log x +O(1),
\end{equation}
where $G$ is the Catalan constant given by
\begin{equation}
G=L(2, \chi_4) = \sum_{k=0}^\infty \frac{(-1)^k}{(2k+1)^2} = 0.915 \ldots
\end{equation}
\end{lemma}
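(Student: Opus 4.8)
The plan is to imitate the proof of Lemma~\ref{2^om/n_onA} with $n$ replaced by $\varphi(n)$. Put $g(n)=2^{\omega(n)}f_{\Ac}(n)/\varphi(n)$; this is multiplicative, with $g(p^{\nu})=2/\varphi(p^{\nu})=2p/((p-1)p^{\nu})$ for $\po$ and $\nu\ge1$, and $g(p^{\nu})=0$ otherwise. I would invoke \cite[Theorem 21.1]{MR2493924} with $\kappa=1$ to get $\sum_{n\le x}g(n)=C'\log x+O(1)$ with $C'=\Gamma(2)^{-1}\prod_{p}\bigl((1-1/p)\sum_{\nu\ge0}g(p^{\nu})\bigr)$, and then evaluate $C'$ by comparing this product with a Dirichlet series, exactly as \eqref{genF} was used in Lemma~\ref{2^om/n_onA}.

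The two hypotheses of \cite[Theorem 21.1]{MR2493924} are checked as before. Using $1/(p-1)=1/p+O(1/p^{2})$ together with Mertens' theorem in the residue class $1(\mmod 4)$, i.e. $\sum_{p\le Q,\ \po}(\log p)/p=\tfrac12\log Q+O(1)$, one finds
\[
\sum_{\substack{p\ge2,\ \nu\ge1\\ w<p^{\nu}\le Q}}g(p^{\nu})\log p^{\nu}=2\sum_{\substack{w<p\le Q\\ \po}}\frac{\log p}{p-1}+O(1)=\log\frac{Q}{w}+O(1),
\]
the contribution of $\nu\ge2$ being $O(1)$ because $\sum_{p}\sum_{\nu\ge2}\nu(\log p)/(p^{\nu-1}(p-1))$ converges; this is the case $\kappa=1$. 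The second hypothesis follows from $\sum_{p}\sum_{k,\nu\ge1}g(p^{k})g(p^{\nu})\log p^{\nu}\ll\sum_{p}(\log p)/p^{2}<\infty$.

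To pin down $C'$ I would introduce, in analogy with \eqref{genF}, the Dirichlet series
\[
\mathcal{G}(s)=\sum_{n=1}^{\infty}\frac{2^{\omega(n)}f_{\Ac}(n)}{\varphi(n)}\,n^{1-s}=\prod_{\po}\frac{1+\frac{p+1}{p-1}p^{-s}}{1-p^{-s}},\qquad\Res s>1,
\]
whose $p$-th Euler factor at $s=1$ is precisely $\sum_{\nu\ge0}g(p^{\nu})$. Pulling out $\zeta(s)$ and $L(s,\chi_4)$ one writes $\mathcal{G}(s)=\zeta(s)L(s,\chi_4)R(s)$ with
\[
R(s)=\prod_{\po}\Bigl(1+\tfrac{p+1}{p-1}p^{-s}\Bigr)\bigl(1-p^{-s}\bigr)\cdot\prod_{\pt}\bigl(1-p^{-2s}\bigr)\cdot\bigl(1-2^{-s}\bigr),
\]
an Euler product absolutely convergent for $\Res s>1/2$; hence $C'=\Gamma(2)^{-1}L(1,\chi_4)R(1)=L(1,\chi_4)R(1)$. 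A short simplification shows the $\po$-factor of $R$ equals $1+p^{-2}$ at $s=1$, so
\[
R(1)=\tfrac12\prod_{\po}(1+p^{-2})\prod_{\pt}(1-p^{-2})=\tfrac12\prod_{p\ne2}\bigl(1+\chi_4(p)p^{-2}\bigr)=\frac{8}{15}\cdot\frac{L(2,\chi_4)}{\zeta(4)},
\]
the last equality using $1+\chi_4(p)p^{-2}=(1-p^{-4})/(1-\chi_4(p)p^{-2})$ for $p\ne2$, $\prod_{p\ne2}(1-p^{-4})=\tfrac{16}{15}\zeta(4)^{-1}$ and $\prod_{p\ne2}(1-\chi_4(p)p^{-2})=L(2,\chi_4)^{-1}$. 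Since $L(1,\chi_4)=\pi/4$, $L(2,\chi_4)=G$ and $\zeta(4)=\pi^{4}/90$, this gives $C'=\tfrac{\pi}{4}\cdot\tfrac{8}{15}\cdot\tfrac{90G}{\pi^{4}}=12G/\pi^{3}$, as required.

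The routine parts --- multiplicativity, the Mertens input, the prime-power tails, the hypotheses of \cite[Theorem 21.1]{MR2493924} --- are harmless; they duplicate Lemma~\ref{2^om/n_onA}. The one step needing care is the evaluation of $R(1)$: recognising the conditionally convergent Levin--Fainleib product as $\zeta(s)L(s,\chi_4)$ times an absolutely convergent correction factor, and then evaluating that factor at $s=1$. Unlike Lemma~\ref{2^om/n_onA}, where only $L(1,\chi_4)$ and $\zeta(2)$ entered, the weight $1/\varphi$ forces $\zeta(4)$ and the Catalan constant $G=L(2,\chi_4)$ into the final constant.
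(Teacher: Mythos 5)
Your proof is correct, but it takes a genuinely different route from the paper's. The paper does not re-run the Levin--Fainleib machinery for the weight $1/\varphi$: it writes $2^{\omega(n)}f_{\Ac}(n)/\varphi(n)=(f_0\ast g)(n)$ with $f_0(n)=2^{\omega(n)}f_{\Ac}(n)/n$ the function of Lemma~\ref{2^om/n_onA} and $g$ the multiplicative correction with $g(p^k)=(-1)^{k+1}\,2f_{\Ac}(p)/(p^k(p-1))$, then sums $\sum_{d}g(d)\sum_{m\le x/d}f_0(m)$, feeds Lemma~\ref{2^om/n_onA} into the inner sum, and controls the error by $\sum_{d\le x}|g(d)|\log d\ll 1$. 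The constant then appears as $\pi^{-1}\sum_d g(d)=\pi^{-1}\prod_{p\equiv 1(\mmod 4)}\frac{p^2+1}{p^2-1}=\pi^{-1}F(2)$, i.e.\ the paper just evaluates the already-derived identity \eqref{genF} at $s=2$ instead of re-proving the Euler-product factorization as you do when computing $R(1)$. Your direct application of \cite[Theorem 21.1]{MR2493924} to $2^{\omega(n)}f_{\Ac}(n)/\varphi(n)$ is equally valid: the hypotheses you check are right (the Mertens input in the class $1\ (\mmod 4)$ gives $\kappa=1$, and the $\nu\ge2$ and second-order tails converge), the factorization $\mathcal{G}(s)=\zeta(s)L(s,\chi_4)R(s)$ with $R$ absolutely convergent for $\Res s>1/2$ is correct, and the local identity $\bigl(1+\tfrac{p+1}{p-1}p^{-1}\bigr)\bigl(1-p^{-1}\bigr)=1+p^{-2}$ together with your final arithmetic does yield $C'=L(1,\chi_4)R(1)=12G/\pi^3$, matching the paper. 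What the paper's route buys is brevity given Lemma~\ref{2^om/n_onA} (the $O(1)$ error is immediate from the convergence of $\sum_d|g(d)|\log d$); what yours buys is logical independence from that lemma --- you only reuse its proof template --- at the cost of redoing the Euler-product bookkeeping from scratch.
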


\begin{proof}
We proceed by the convolution method and write $f_0(n)=2^{\om(n)} \fa(n) /n$, $f(n)=2^{\om(n)} \fa(n) /\varphi(n)$ and $f=f_0 \ast g$. Computing $g$ on prime powers we get that $g(p^0)=1$ and
\begin{equation} \label{gprime}
g(p^k) = (-1)^{k+1} \frac{2 \fa(p)}{p^k (p-1)}, \quad k \ge 1.
\end{equation}
Further, we write
\begin{equation}
\begin{split}
G_0 &= \prod_p \Big(1+g(p)+g(p^2)+\ldots\Big) \\
&= \prod_{\po} \biggl(1+\frac{2}{p(p-1)}\Big(1-\frac{1}{p}+\frac{1}{p^2}-\ldots \Big)\biggr) \\
&= \prod_{\po} \biggl(1+\frac{2}{p^2-1}\biggr) = \prod_{\po} \biggl(\frac{p^2+1}{p^2-1}\biggr).
\end{split}
\end{equation}
Using \eqref{genF} with $s=2$ we get
\begin{equation}
G_0  = F(2) = \frac{L(2,\chi_4) \zeta(2)}{\zeta(4)} \biggl(1+\frac{1}{2^2}\biggr)^{-1} = \frac{12G}{\pi^2},
\end{equation}
where in the last equality we used that $\zeta(2) = \pi^2/6$, $\zeta(4) = \pi^4 /90$ and $L(2,\chi_4)=G$.
Finally, on applying, for example, \cite[Theorem 1.1]{MR2061214} one immediately sees that
\begin{equation}
	\sum_{d \le x} g(d) = G_0 + O \biggl(\frac{1}{\log x}\biggr)
\end{equation}
and thus
\begin{equation}
\begin{split}
\sum_{n \le x} f(n) &= \sum_{n \le x} f_0 \ast g (n) = \sum_{d \le x} g(d) \sum_{n \le x/d}f_0(n) \\
&= \frac{\log x}{\pi} \sum_{d \le x} g(d)+ O\Big( \sum_{d \le x} |g(d)| \log d\Big) =  \frac{G_0 }{\pi} \log x+O(1),
\end{split}
\end{equation}
where in the last equality we used
\begin{equation}
	\sum_{d \le x} |g(d)|\log d = \sum_{d \le x} \frac{\fa(d) \log d}{d} \prod_{p \mid d}\frac{2}{p-1} \le \sum_{d \le x} \frac{\log d}{d 2^{\omega(d)}} \ll 1.
\end{equation}
\end{proof}

\begin{lemma}\label{uv-sols}
Let $\rho_1(d)$ be the number of solutions to
\begin{equation}
u^2+v^2 \equiv 0 (\mmod d), \quad u,v \mmod d, \quad (v,d)=1.
\end{equation}

Then $\rho(d)$ is a multiplicative functions defined on prime powers $d=p^\alpha$, $\alpha \ge 1$ as
\begin{equation} \label{solcond}
	\rho(p^\alpha)= ( 1+ \chi_4(p) ) \varphi(p^\alpha) = 2 \varphi(p^\alpha) \fa(p), \quad  p \neq 2, \quad i=1,2
\end{equation}
and $\rho_1(2)=1$, $\rho_1(2^\alpha)=0$ for $\alpha \ge 2$.


\end{lemma}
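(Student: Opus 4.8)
The plan is to reduce everything to the Chinese Remainder Theorem together with a Hensel-lifting argument. First I would verify that $\rho_1$ is multiplicative: if $d = d_1 d_2$ with $(d_1, d_2) = 1$, then by CRT a residue pair $(u,v) \bmod d$ corresponds bijectively to pairs $(u_1,v_1) \bmod d_1$ and $(u_2,v_2) \bmod d_2$, the congruence $u^2 + v^2 \equiv 0 \pmod{d}$ is equivalent to the two congruences modulo $d_1$ and modulo $d_2$, and $(v,d) = 1$ is equivalent to $(v,d_1) = (v,d_2) = 1$. Hence $\rho_1(d_1 d_2) = \rho_1(d_1)\rho_1(d_2)$, so it suffices to evaluate $\rho_1$ on prime powers.

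For an odd prime power $p^\alpha$ the coprimality condition makes $v$ invertible mod $p^\alpha$, so after the substitution $w \equiv u v^{-1} \pmod{p^\alpha}$ the equation becomes $w^2 \equiv -1 \pmod{p^\alpha}$; for each of the $\varphi(p^\alpha)$ admissible values of $v$ the number of admissible $u$ equals the number of such $w$. If $p \equiv 3 \pmod{4}$ there is already no solution modulo $p$, hence none modulo $p^\alpha$. If $p \equiv 1 \pmod{4}$, then $-1$ is a quadratic residue modulo $p$ with two distinct square roots, and since $X^2+1$ has derivative $2X$, which is a unit at any such root (as $p$ is odd), Hensel's lemma lifts each root uniquely to modulus $p^\alpha$, giving exactly two solutions. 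Thus the number of $w$ equals $1 + \chi_4(p)$, so $\rho_1(p^\alpha) = (1+\chi_4(p))\varphi(p^\alpha)$; recalling that $\fa(p) = 1$ precisely when $p \equiv 1 \pmod{4}$, this is the same as $2\varphi(p^\alpha)\fa(p)$.

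For $p = 2$ I would argue directly. Modulo $2$ the only admissible residue for $v$ is $v \equiv 1$, which forces $u \equiv 1$ via $u^2 \equiv -v^2 \pmod{2}$, and this pair does solve the congruence, so $\rho_1(2) = 1$. For $\alpha \ge 2$, any admissible $v$ is odd, hence $u$ is odd as well; but then $u^2 \equiv v^2 \equiv 1 \pmod{4}$, so $u^2 + v^2 \equiv 2 \pmod{4}$, which can never be $\equiv 0 \pmod{2^\alpha}$ since $4 \mid 2^\alpha$. Therefore $\rho_1(2^\alpha) = 0$ for all $\alpha \ge 2$, which completes the evaluation of $\rho_1$ on prime powers and hence, by multiplicativity, on all $d$.

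There is no genuine obstacle here; the only points requiring a little care are the $p = 2$ case and the observation that Hensel's lemma applies to $X^2 + 1$ at odd primes because the relevant derivative $2X$ is prime to $p$. It is worth noting in passing that the two displayed expressions for $\rho_1(p^\alpha)$ coincide for every $\alpha \ge 1$, which is the form in which the lemma will be fed into the convolution identities used later.
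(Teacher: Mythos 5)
Your proof is correct and follows essentially the same route as the paper: both reduce the count at an odd prime power to the number of square roots of $-1$ via the substitution $w \equiv uv^{-1}$ and then lift to higher prime powers (you via Hensel's lemma on $w^2+1$, the paper by lifting the pair $(u,v)$ through a linear congruence — the same computation in different packaging), and both treat $p=2$ by direct inspection. The only differences are cosmetic and in your favour: you make the CRT multiplicativity explicit, and your mod-$4$ obstruction for $\rho_1(2^\alpha)=0$, $\alpha\ge 2$, is self-contained where the paper computes $\rho(4)=\rho(8)=0$ and cites a reference for the general case.
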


\begin{proof}
This is a standard result, which we present for the sake of completeness and 
follow closely Plaksin \cite[Lemma 14 and Lemma 15]{MR616225}. 
Let $d=p^\alpha$, $p \neq 2$ and $(u_\alpha,v_\alpha)$ be a solution to \eqref{solcond}. 
Consider the solutions $(u_{\alpha+1}, v_{\alpha+1})$ that contribute to $\rho(p^{\alpha+1})$.
We have $u_{\alpha+1} = u_\alpha+t_1p^\alpha$, $v_{\alpha+1} = v_\alpha+t_2p^\alpha$ with $0 \le t_1,t_2<p$. 
Hence $2u_\alpha t_1+2 v_\alpha t_2 \equiv 0 (\mmod p)$.
Since $p \neq 2$ this is a linear congruence in $t_1, t_2$ that has exactly $p$ solutions. 
Thus $\rho(p^{\alpha+1}) = p\cdot \rho(p^\alpha)=p^\alpha \rho(p)$. 
For $\rho(p)$ we have
\begin{equation}
\begin{split}
\rho(p) &= \# \{ (u/v)^2 \equiv -1 (\mmod p), \quad u,v \mmod p, \quad (v,p)=1\} \\
&=\# \{ x^2 \equiv -1 (\mmod p)\} \cdot (p-1) = 2(p-1)\fa(p), 
\end{split}
\end{equation}
concluding the statement for $p \neq 2$. Now let $p=2$. 
Calculating the first three values we get $\rho(2)=1$, $\rho(4)=\rho(8)=0$. 
Using \cite[Chapter VI, Section 4]{MR0190056} we obtain $\rho(2^\alpha)=0$ for $\alpha \ge 2$, hence finishing the proof.
\end{proof}

\section{Number of solutions to $a^2+p^2=c^2+d^2 \le x$} \label{r_0r_1sec}
We follow closely the approach of Hooley \cite{MR3763074} as well as use the result on the second moment of twisted Hooley delta function from \cite{MR3060456}. 

We emphasize that the asymptotic formula for $S_{1,1}(x)$ of Daniel \cite{MR1833070} makes a great deal out of the diagonal solutions, while here we can not afford exactly the same procedure because in our case the diagonal solutions contribute only to the error term.

The technique developed in this section is going to be applied in slightly modified form for $S_{0,2}(x)$ as well, so we present it in detail here to skip similar computations in the next section.

We wish to subdivide the range of summation over $d$ into three parts: "small" divisors, "large" divisors and the "middle" ones, say, around square root. 
Then due to the symmetry of the form in question we can switch to the co-divisors and observe that the main term should come from the "small" and "large" ranges, which are, indeed, equal. 
We next apply the Barban-Davenport-Halberstam theorem in the main term.

Let $A$ be some large positive integer. 
By the definition of $r_0$ and $r_2$ given in \eqref{r_0def} and \eqref{r_2def} respectively we have
\begin{equation}
	S_{0,1}(x) =\sum_{n \leq x} r_0(n) r_1(n)= \sum_{n \leq x} r_1(n) \sum_{d \mid n} \chi_4(d).
\end{equation}

\noindent Notice that since we sum over those $n$ that can be written as $a^2+p^2$, then for $2 \nmid n$ we have $\chi_4(n)=1$ and if $\delta \mid n$, then $\chi_4(\delta)=\chi_4(n/\delta)$. 
Thus, we split the divisors into three ranges
\begin{equation}
	S_{0,1}(x) = \sum_{\bfrac{n \leq x}{2 \nmid n}} r_1(n) \sum_{i=1}^3 \sigma_i(n) + \sum_{\bfrac{n \leq x}{2 \mid n}} r_1 (n) \sum_{i=1}^3 \sigma_i(n),
\end{equation}
where
\begin{equation} \label{defsigma1r0r2}
	\sigma_1(n)=\sum_{\bfrac{d \mid n}{d \leq \sqrt{n}/\log^A x}}\chi_4(d), \quad \quad \sigma_3(n)=\sum_{\bfrac{d \mid n}{d \geq \sqrt{n}\log^A x}}\chi_4(d),
\end{equation}
and, finally,
\begin{equation} \label{defsgima2r0r2}
	\sigma_2(n)=\sum_{\bfrac{d \mid n}{d \simA \sqrt{n}}}\chi_4(d),
\end{equation}
where $d \simA \sqrt{n}$ means that $\sqrt{n}/\log^A x < d \le \sqrt{n} \log^A x$.

\noindent When $2 \nmid n$ we can switch to the co-divisors in  $\sigma_3(p,q)$ and obtain
\begin{equation}
	\sigma_3(n)=\sum_{\bfrac{ \delta d = n}{d \geq \sqrt{n}\log^A x}}\chi_4(d) = \sum_{\bfrac{ \delta \mid n}{\delta \leq \sqrt{n}/\log^A x}}\chi_4(n/\delta) = \sum_{\bfrac{ \delta \mid n}{\delta \leq \sqrt{n}/\log^A x}}\chi_4(\delta) = \sigma_1(n).
\end{equation}

\noindent Thus
\begin{equation} \label{s01inr0r2}
	\begin{split}
		S_{0,1}(x) &= 2 \sum_{\bfrac{n \le x}{2 \nmid n}}r_1(n) \sigma_1(n)+ \sum_{\bfrac{n \le x}{2 \nmid n}} r_1(n) \sigma_2(n)+ \sum_{\bfrac{n \leq x}{2 \mid n}} r_1(n) \sum_{i=1}^3 \sigma_i(n) \\
				&= 2 \sum_{\bfrac{n \le x}{2 \nmid n}} r_1(n) \sigma_1(n)+ \sum_{n \le x} r_1(n) \sigma_2(n) +  \sum_{\bfrac{n \leq x}{2 \mid n}} r_1(n) \bigl(\sigma_1(n)+\sigma_3(n)\bigr)\\
				&= 2\sum_{n \le x} r_1(n) \sigma_1(n)+\sum_{n \le x} r_1(n) \sigma_2(n) + \sum_{\bfrac{n\leq x}{2 \mid n}} r_1(n) \bigl(\sigma_3(n)-\sigma_1(n)\bigr)\\
				&= 2\sum_{n \le x} r_1(n) \sigma_1(n)+R(x),
	\end{split}
\end{equation}
where
\begin{equation}
	\begin{split}
		R(x) &= \sum_{n \le x} r_1(n) \sigma_2(n) -  \sum_{\bfrac{n \leq x}{2 \mid n}} r_1(n) \sum_{\bfrac{d \mid n}{2^{-k} \sqrt{n}/\log^A x < d < \sqrt{n}/\log^A x}} \chi_4(d)\\
			&= R_1(x) + R_1'(x),
	\end{split}
\end{equation}
where $k = \max \{ \ell \colon 2^\ell \mid n \}$ (note that in our case $k \le 1$).
In the above we used the fact that
\begin{equation}
	\begin{split}
	\sum_{\bfrac{n \leq x}{2 \mid n}} r_1(n) (\sigma_3(n)-\sigma_1(n)) &= - \sum_{\bfrac{n \leq x}{2 \mid n}} r_1(n) \sum_{\bfrac{d \mid n}{2^{-k} \sqrt{n}/\log^A x < d < \sqrt{n}/\log^A x}} \chi_4(d)
	\end{split}
\end{equation}
since
\begin{equation}
	\begin{split}
		\sum_{\bfrac{d \mid n}{d \ge \sqrt{n}\log^A x}} \chi_4(d) &= \sum_{\bfrac{d \mid n}{\bfrac{d \ge \sqrt{n}\log^A x}{2 \nmid d}}} \chi_4(d) =  \sum_{\bfrac{\delta \mid n}{\bfrac{\delta \le \sqrt{n}/\log^A x}{2 \mid \delta, \,  2 \nmid (n/\delta)}}} \chi_4(n/\delta)\\
		&= \sum_{\bfrac{\tilde{\delta} \mid \tilde{n}}{\bfrac{\tilde{\delta} \le 2^{-k} \sqrt{n}/\log^A x}{2 \nmid \tilde{n}, \, 2 \nmid \tilde{\delta}}}} \chi_4(\tilde{n}/\tilde{\delta}) = \sum_{\bfrac{\delta \mid n}{\delta \le 2^{-k} \sqrt{n}/{\log^A x}}} \chi_4(\delta).
	\end{split}
\end{equation}

\noindent Let $D=\sqrt{x}/\log^A x$ and write $S_{0,1}(x) = 2M_1(x)+R(x)$, where the expected main term is 
\begin{equation} \label{M2def}
	M_1(x) = \sum_{n \le x} r_1(n) \sigma_1(n) = \sum_{d \le D} \chi_4(d) \sum_{\bfrac{n \le x}{n \equiv 0(\mmod d)}} r_1(n), 
\end{equation}
and the expected error term is $R(x) = R_1(x)+R_1'(x)$.

First notice that if we do not aim for an asymptotic formula, then the correct order of magnitude is mush easier to get (in fact, we can use only Brun-Titchmarsh theorem and Lemma \ref{uv-sols} to get the result). 
Splitting the quadratic congruence into two linear ones we have
\begin{equation}
		\sum_{\bfrac{n \le x}{n \equiv 0(\mmod d)}} r_1(n)  = \sum_{\bfrac{u,v (\mmod d)}{u^2+v^2 \equiv 0 (\mmod d)}} \sum_{\bfrac{a^2+p^2 \le x}{\bfrac{ a \equiv u (\mmod d)}{p \equiv v(\mmod d)}}}1.
\end{equation}

\noindent Further, when $d \le D$ we clearly have
\begin{equation} \label{splitM1}
		\sum_{\bfrac{n \le x}{n \equiv 0(\mmod d)}} r_1(n)  = \sum_{\bfrac{u,v (\mmod d)}{\bfrac{(v,d)=1}{u^2+v^2 \equiv 0 (\mmod d)}}} \sum_{\bfrac{a^2+p^2 \le x}{\bfrac{ a \equiv u (\mmod d)}{p \equiv v(\mmod d)}}}1 + O(\omega(d) \sqrt{x})
\end{equation}
and
\begin{equation} \label{M1bound}
		M_1(x) \ll \frac{x}{\log x} \sum_{d \le D} \frac{1}{d\varphi(d)} \sum_{\bfrac{u,v (\mmod d)}{\bfrac{(v,d)=1}{u^2+v^2 \equiv 0 (\mmod d)}}} 1  +  \sqrt{x} \sum_{d \le D} \omega(d) \ll x,
\end{equation}
where we applied Lemma \ref{uv-sols} , Lemma \ref{2^om/n_onA} and Brun-Titchmarsh theorem
\begin{equation} \label{quadcongr}
	\begin{split}
		\sum_{\bfrac{a^2+p^2 \le x}{\bfrac{ a \equiv u (\mmod d)}{p \equiv v(\mmod d)}}}1 &\le \sum_{\bfrac{a \le \sqrt{x}}{a \equiv u (\mmod d)}} \sum_{\bfrac{p \le \sqrt{x}}{ p \equiv v (\mmod d)}}1\\
		& \ll \frac{\sqrt{x}}{d}\frac{\sqrt{x}}{\varphi(d) \log (x/D)} \ll  \frac{1}{d\varphi(d)} \frac{x}{\log x}.
	\end{split}
\end{equation}

Further, for $R_1(x)$ first notice that
\begin{equation}
	\begin{split}
		\sum_{n \le x/ \log^A x} r_1(n) \sum_{\bfrac{d \mid n}{d \simA \sqrt{n}}} 1 &\le \sum_{d \le \sqrt{x} \log^{A/2}x} \sum_{\bfrac{n \le x/\log^A x}{n \equiv 0 (\mmod d)}}r_1(n) \\
		&\ll \sum_{d \le \sqrt{x} \log^{A/2}x} \frac{2^{\om(d)}}{d} \fa(d) \frac{x}{\log^{A+1}x} \ll \frac{x}{\log^A x},
	\end{split}
\end{equation}
where we used Lemma \ref{2^om/n_onA}, Lemma \ref{uv-sols} and \eqref{quadcongr}.
Acting similarly in the remaining range one obtains
\begin{equation}
	\begin{split}
		\sum_{x/ \log^A x < n \le x} r_1(n) \sum_{\bfrac{d \mid n}{d \simA \sqrt{n}}} 1 &\le \sum_{d \le \sqrt{x}\log^A x} \sum_{\bfrac{x/\log^A x < n \le x}{\bfrac{d^2/\log^A x \le n \le d^2 \log^A x}{n \equiv 0 (\mmod d)}}}r_1(n) \\
		&= \sum_{\bfrac{d \le \sqrt{x}\log^A x}{d \ge \sqrt{x}/\log^A x}} \sum_{\bfrac{x/\log^A x < n \le x}{\bfrac{d^2/\log^A x \le n \le d^2 \log^A x}{n \equiv 0 (\mmod d)}}}r_1(n) \\
		&\ll \sum_{\bfrac{d \le \sqrt{x}\log^A x}{d \ge \sqrt{x}/\log^A x}} \frac{2^{\om(d)}}{d} \fa(d) \frac{x}{\log x} \ll \frac{x}{\log x} \log \log x.
	\end{split}
\end{equation}
Hence $R_1(x) = O((x / \log x) \log \log x)$ and analogous estimate holds for $R_1'(x)$.
We conclude
\begin{equation}
	S_{0,1}(x) = 2M_1(x) + O\biggl( \frac{x}{\log x} \log \log x\biggr).
\end{equation}

Now let us focus on $M_1(x)$ that gives us the main term. 
 We split the quadratic congruence into two linear ones as in \eqref{splitM1} and notice
 \begin{equation}
 	\sum_{d \le D} \omega(d) \sqrt{x} \ll \sqrt{x} D \log D =\frac{x}{\log^{A-1}x},
 \end{equation}
  hence
 \begin{equation}
 	M_1(x) = 2 \sum_{d \le D} \chi_4(d) \sum_{\bfrac{u,v (\mmod d)}{\bfrac{(v,d)=1}{u^2+v^2 \equiv 0 (\mmod d)}}} \sum_{\bfrac{a^2+p^2 \le x}{\bfrac{ a \equiv u (\mmod d)}{p \equiv v(\mmod d)}}}1 + O\biggl( \frac{x}{\log^{A-1}x}\biggr).
 \end{equation}
 
\noindent Further, for the internal sum one has
\begin{equation}
	\begin{split}
		\sigma_{u,v,d}(x) &=   	\sum_{\bfrac{a^2+p^2 \le x}{\bfrac{ a \equiv u (\mmod d)}{p \equiv v(\mmod d)}}}1 = \sum_{\bfrac{a \le \sqrt{x}}{a \equiv u (\mmod d)}} \sum_{\bfrac{p \le \sqrt{x-a^2}}{p \equiv v(\mmod d)}} 1 \\
			&=  \sum_{\bfrac{a \le \sqrt{x}}{a \equiv u (\mmod d)}} \biggl( \frac{1}{\varphi(d)} \sum_{p \le \sqrt{x-a^2}} 1 +E(\sqrt{x-a^2};d,v)\biggr),  
	\end{split}
\end{equation}
where
\begin{equation}
	E(t;d,v) =  \sum_{\bfrac{p \le t}{p \equiv v (\mmod d)}} 1 - \frac{1}{\varphi(d)} \sum_{p \le t} 1
\end{equation}
is the error term in the prime number theorem in arithmetic progressions.

\noindent By the Barban-Davenport-Halberstam theorem we know that for any fixed $B>0$ such that $t/\log^B t \le T \le t$ we have
\begin{equation} \label{BDH}
	\sum_{d \le T} \sum_{ \bfrac{v (\mmod d)}{(v,d)=1}} \max_{y \le t} E(y;d,v)^2 \ll_B t T \log t.
\end{equation}

\noindent Hence, the contribution to $M_1(x)$ coming from $E(\sqrt{x-a^2};d,v)$ is given by
\begin{equation}
	\begin{split}
		&\sum_{d \le D} \chi_4(d) \sum_{\bfrac{u,v (\mmod d)}{\bfrac{(v,d)=1}{u^2+v^2 \equiv 0 (\mmod d)}}} \sum_{\bfrac{a \le \sqrt{x}}{a \equiv u(\mmod d)}} E(\sqrt{x-a^2};d,v)\\
			\ll & \sqrt{x}  \sum_{d \le D} \frac{1}{d} \sum_{\bfrac{v (\mmod d)}{(v,d)=1}} E^*(\sqrt{x};d,v) \sum_{\bfrac{u (\mmod d)}{u^2+v^2 \equiv 0(\mmod d)}}1 \\
			\ll & \sqrt{x} \sum_{d \le D} \frac{1}{d} \biggl(  \sum_{\bfrac{v (\mmod d)}{(v,d)=1}} E^*(\sqrt{x};d,v)^2\biggr)^{1/2} \biggl(  \sum_{\bfrac{v (\mmod d)}{(v,d)=1}} \biggl(\sum_{\bfrac{u (\mmod d)}{u^2+v^2 \equiv 0(\mmod d)}}1 \biggr)^2\biggr)^{1/2}\\
			\ll & \sqrt{x} \Biggr(\sum_{d \le D} \sum_{\bfrac{v (\mmod d)}{(v,d)=1}}E^*(\sqrt{x};d,v)^2 \Biggr)^{1/2} 
				\Biggr( \sum_{d \le D} \frac{1}{d^2} \sum_{\bfrac{v (\mmod d)}{(v,d)=1}} \Biggr( \sum_{\bfrac{u (\mmod d)}{u^2+v^2 \equiv 0 (\mmod d)}}  1\Biggr)^2\Biggr)^{1/2} \\
			\ll & \frac{x}{\log ^{\frac{A-5}{2}} x},
\end{split}
\end{equation}
where $E^*(\sqrt{x};d,v) = \sup_{a \le \sqrt{x}}  E(\sqrt{x-a^2};d,v)$. 
In the above we applied \eqref{BDH} for $A>1$
\begin{equation}
	\sum_{d \le D} \sum_{\bfrac{v (\mmod d)}{(v,d)=1}}E(\sqrt{x};d,v)^2 \ll_A \sqrt{x}D \log x = \frac{x}{\log^{A-1}x}
\end{equation}
and the trivial bound
\begin{equation}
	\begin{split}
		&\sum_{d \le D} \frac{1}{d^2} \sum_{\bfrac{v (\mmod d)}{(v,d)=1}} \Biggr( \sum_{\bfrac{u (\mmod d)}{u^2+v^2 \equiv 0 (\mmod d)}}  1\Biggr)^2 \\
		&\ll
		\sum_{d \le D} \frac{\varphi(d)}{d^2} \#\{x: \; x^2 \equiv -1 (\mmod d)\}^2 \ll \sum_{d \le D} \frac{2^{\om(d)}}{d} \ll \log^2 x.
	\end{split}
\end{equation}
 (the last can be seen, for example, by Levin-Fainleib theorem \cite{MR0229600}).

\noindent Thus, taking $A>5$ we obtain that for $B = (A-5)/{2}>0$ one has
\begin{equation} \label{Mest1}
	M_1(x) = 2 \sum_{d \leq D} \frac{\chi_4(d)}{\varphi(d)} \sum_{\bfrac{u,v (\mmod d)}{\bfrac{(v,d)=1}{u^2+v^2 \equiv 0 (\mmod d)}}}  \sum_{\bfrac{a \leq \sqrt{x}}{a \equiv u(\mmod d)}} \pi(\sqrt{x-a^2})+O\left(\frac{x}{\log^B x}\right).
\end{equation}

\noindent We continue working with the internal sum and get 
\begin{equation}
	\begin{split}
		  \sum_{\bfrac{a \leq \sqrt{x}}{a \equiv u(\mmod d)}} \sum_{p \le \sqrt{x-a^2}} 1 &= \sum_{p \le \sqrt{x}}  \sum_{\bfrac{a \leq \sqrt{x-p^2}}{a \equiv u(\mmod d)}} 1 = \sum_{p \le \sqrt{x}} \biggl(\frac{1}{d} \sum_{a \le \sqrt{x-p^2}}1 +O(1)\biggr) \\
		  &= \frac{1}{d} \biggl(\sum_{a^2+p^2 \le x}1+O\biggl( \frac{\sqrt{x}}{\log x} \biggr) \biggr).
	\end{split}
\end{equation}
The contribution of the error term coming from the big $O$ above is bounded via Lemma \ref{2^om/n_onA} and Lemma \ref{uv-sols} by
\begin{equation}
	\begin{split}
		\frac{\sqrt{x}}{\log x}  \sum_{d \le D} \frac{1}{\varphi(d)}  \sum_{\bfrac{u,v (\mmod d)}{\bfrac{(v,d)=1}{u^2+v^2 \equiv 0 (\mmod d)}}} 1 &= \frac{\sqrt{x}}{\log x}  \sum_{d \le D} 2^{\omega(d)} \fa(d) \\
		&\ll \frac{\sqrt{x}}{\log x} D \log D \ll \frac{x}{\log^A x}.
	\end{split}
\end{equation}

\noindent On applying \eqref{s1} and Lemma \ref{uv-sols} we get
\begin{equation}
		M_1(x) =  2 \sum_{d \leq D} \frac{\chi_4(d)}{d \varphi(d)} \sum_{\bfrac{u,v (\mmod d)}{\bfrac{(v,d)=1}{u^2+v^2 \equiv 0 (\mmod d)}}} \biggl( \frac{\pi}{2} \frac{x}{\log x} + O\biggl( \frac{x}{\log^2 x}\biggr) \biggr) + O \biggl( \frac{x}{\log^B x}\biggr).
\end{equation}

\noindent  By Lemma \ref{2^om/n_onA} and Lemma \ref{uv-sols} the error term coming from the sum is
\begin{equation}
	\ll \frac{x}{\log^2 x}  \sum_{d \leq D} \frac{1}{d \varphi(d)} \sum_{\bfrac{u,v (\mmod d)}{\bfrac{(v,d)=1}{u^2+v^2 \equiv 0 (\mmod d)}}} 1 \ll \frac{x}{\log^2 x}  \sum_{d \leq D} \frac{2^{\omega(d)}}{d} \fa(d) \ll \frac{x}{\log x}.
\end{equation}

\noindent Using  Lemma \ref{2^om/n_onA} and Lemma \ref{uv-sols} again  we conclude
\begin{equation}
	\begin{split}
		M_1(x) &=  \frac{\pi x}{\log x} \sum_{d \leq D} \frac{\chi_4(d)}{d \varphi(d)} \sum_{\bfrac{u,v (\mmod d)}{\bfrac{(v,d)=1}{u^2+v^2 \equiv 0 (\mmod d)}}} 1 +  O \biggl( \frac{x}{\log x}\biggr) \\
			&= \frac{\pi x}{\log x} \sum_{d \leq D}  \frac{2^{\omega(d)}}{d} \fa(d) + O \biggl( \frac{x}{\log x}\biggr) 
			= \frac{x}{2}+ O \biggl( \frac{x  \log \log x}{\log x}\biggr).
	\end{split}
\end{equation}

\section{Number of solutions to $p^2+q^2 = a^2+b^2 \le x$} \label{r_0r_2sec}
In this section we follow closely the technique developed in Section \ref{r_0r_1sec}.
As before let $A$ be some large positive integer and $D=\sqrt{x}/\log^A x$.
Then acting as in Section \ref{r_0r_1sec} we can write  $S_{0,2}(x) = 2M_2(x)+R_2(x)$, where
\begin{equation} \label{M2def}
	M_2(x) = \sum_{d \le D} \chi_4(d) \sum_{\bfrac{n \le x}{n \equiv 0(\mmod d)}} r_2(n), 
\end{equation}
and $R_2(x)$ corresponds to the sum over the middle divisors and can be bounded as in the previous section  
\begin{equation}
	R_2(x) \ll \frac{x}{\log^2 x} \log \log x.
\end{equation}

We split the quadratic congruence into two linear ones as
\begin{equation} \label{split}
		\sum_{\bfrac{n \le x}{n \equiv 0(\mmod d)}} r_2(n)  = \sum_{\bfrac{u,v (\mmod d)}{\bfrac{(uv,d)=1}{u^2+v^2 \equiv 0 (\mmod d)}}} \sum_{\bfrac{p^2+q^2 \le x}{\bfrac{ p \equiv u (\mmod d)}{q \equiv v(\mmod d)}}}1 + O(\omega(d) \sqrt{x})
\end{equation}
and bound the big $O$ contribution to $M_2(x)$ as
 \begin{equation}
 	\sum_{d \le D} \omega(d) \sqrt{x} \ll \sqrt{x} D \log D =\frac{x}{\log^{A-1}x}.
 \end{equation}
 
\noindent Further, for the internal sum one has
\begin{equation}
	\begin{split}
		\sigma_{u,v,d}(x) &=   	\sum_{\bfrac{p^2+q^2 \le x}{\bfrac{ p \equiv u (\mmod d)}{q \equiv v(\mmod d)}}}1 = \sum_{\bfrac{p \le \sqrt{x}}{p \equiv u (\mmod d)}} \sum_{\bfrac{q \le \sqrt{x-p^2}}{q \equiv v(\mmod d)}} 1 \\
			&=  \sum_{\bfrac{p \le \sqrt{x}}{p \equiv u (\mmod d)}} \biggl( \frac{1}{\varphi(d)} \sum_{q \le \sqrt{x-p^2}} 1 +E(\sqrt{x-p^2};d,v)\biggr).
	\end{split}
\end{equation}

\noindent By the Barban-Davenport-Halberstam theorem \eqref{BDH} and the Brun-Titchmarsh theorem applied to the sum over $p \le \sqrt{x}$ the contribution to $M_2(x)$ coming from $E(\sqrt{x-p^2};d,v)$ is bounded by
\begin{equation}
		   \frac{\sqrt{x}}{\log x}  \sum_{d \le D} \frac{1}{\varphi(d)} \sum_{\bfrac{v (\mmod d)}{(v,d)=1}} E(\sqrt{x};d,v) \sum_{\bfrac{u (\mmod d)}{u^2+v^2 \equiv 0(\mmod d)}}1 \ll  \frac{x}{\log ^{\frac{A-3}{2}} x}.
\end{equation}

\noindent Thus, we conclude that for any $B = (A-3)/2>0$ we have
 \begin{equation}
 	M_2(x) =  2 \sum_{d \le D} \frac{\chi_4(d)}{\varphi(d)} \sum_{\bfrac{u,v (\mmod d)}{\bfrac{(uv,d)=1}{u^2+v^2 \equiv 0 (\mmod d)}}}  \sum_{\bfrac{p \le \sqrt{x}}{p \equiv u (\mmod d)}} \sum_{q \le \sqrt{x-p^2}} 1 + O \biggl(  \frac{x}{\log^{B} x} \biggr).
 \end{equation}

\noindent Further, we write 
\begin{equation}
	\begin{split}
		 \sum_{\bfrac{p \le \sqrt{x}}{p \equiv u (\mmod d)}} \sum_{q \le \sqrt{x-p^2}} 1 & = \sum_{q \le \sqrt{x}}  \sum_{\bfrac{p \le \sqrt{x-q^2}}{p \equiv u (\mmod d)}} 1 \\
		 &= \frac{1}{\varphi(d)} \sum_{n \le x} r_2(n) + O \biggl( E(\sqrt{x};d,u) \frac{\sqrt{x}}{\log x}\biggr).
	\end{split}
\end{equation}

\noindent The contribution of $E(\sqrt{x};d,u)$ can be bounded exactly as the one above of $E(\sqrt{x};d,v)$, while for the main term we apply \eqref{s2} and get
\begin{equation}
	M_2(x) =   2 \sum_{d \le D} \frac{\chi_4(d)}{\varphi^2(d)} \sum_{\bfrac{u,v (\mmod d)}{\bfrac{(uv,d)=1}{u^2+v^2 \equiv 0 (\mmod d)}}} \biggl( \frac{ \pi x}{\log^2 x} + O\biggl(\frac{x \log \log x}{\log^3 x} \biggr)\biggr)+O \biggl(  \frac{x}{\log^{B} x} \biggr).
\end{equation}

\noindent  By Lemma \ref{2^om/phi(n)_onA} and Lemma \ref{uv-sols} the error term coming from the sum is
\begin{equation}
	\ll \sum_{d \leq D} \frac{1}{\varphi^2(d)} \sum_{\bfrac{u,v (\mmod d)}{\bfrac{(v,d)=1}{u^2+v^2 \equiv 0 (\mmod d)}}} \ll  \sum_{d \leq D} \frac{2^{\omega(d)}}{\varphi(d)} \fa(d) \ll  \log x.
\end{equation}

\noindent Using  Lemma \ref{2^om/n_onA} and Lemma \ref{uv-sols} again,  we conclude
\begin{equation}
	\begin{split}
		M_2(x) &=  \frac{ 2 \pi x}{\log^2 x} \sum_{d \leq D} \frac{\chi_4(d)}{\varphi^2(d)} \sum_{\bfrac{u,v (\mmod d)}{\bfrac{(v,d)=1}{u^2+v^2 \equiv 0 (\mmod d)}}} 1 +  O \biggl( \frac{x \log \log x}{\log^2 x}\biggr) \\
			&= \frac{\pi x}{\log^2 x} \sum_{d \leq D}  \frac{2^{\omega(d)}}{d} \fa(d) + O \biggl( \frac{x \log \log x}{\log^2 x}\biggr)  \\
			&= \frac{12 G}{\pi^2} \frac{x}{\log x} + O \biggl( \frac{x \log \log x}{\log^2 x}\biggr).
	\end{split}
\end{equation}

\section{Number of solutions to $a^2+p^2 = q^2+r^2 \le x$} \label{r_1r_2sec}

Notice that the lower bound follows immediately from $S_{1,2}(x) \ge S_{2,2}(x) \gg x/\log^2 x$, hence we focus on the upper bound here.

\noindent First, using \eqref{s2} and taking away the diagonal solutions one can write
\begin{equation} \label{n1def}
	\begin{split}
		S_{1,2}(x) &=  \sum_{n \le x}r_2^2(n)+N(x) =  \frac{2 \pi x}{\log^2 x}+N(x)+O\biggl(\frac{x \log \log x}{\log^3 x}\biggr),
	\end{split}
\end{equation}

\noindent where $N(x)$ counts the off-diagonal solutions to $a^2+p^2 = q^2+r^2 \le x$, namely, the ones with $\{a,p\} \neq \{q,r\}$.
We now wish to bound $N(x) \ll x/\log^2 x$. 
Notice that the contribution of $a=p$ or $q=r$ is trivially bounded by $x^{1/2}$, so we can assume $a \neq p$, $q \neq r$.
One can always assume that $q < r$ since otherwise we can just rename the variables.
When $q=2$ the number of quadruples in $N(x)$ is $\ll x^{1/2+\veps}$, thus we may neglect it and similarly for all the other variables.

Let $N_1(x)$ be the number of $(a,p,q,r) \in \N \times \P^3$ such that
\begin{equation} \label{main_r_1r_2}
	a^2+p^2=q^2+r^2 \le x, \quad  2 < a < q < r < p
\end{equation}

\noindent and $N_1'(x)$ be the number of $(a,p,q,r) \in \N \times \P^3$ such that
\begin{equation} \label{main2_r_1r_2}
	a^2+p^2=q^2+r^2 \le x, \quad  a, p \in (q,r), \, q > 2
\end{equation}
and $N_1''(x)$ be the number of $(a,p,q,r) \in \N \times \P^3$ such that
\begin{equation} 
	a^2+p^2=q^2+r^2 \le x, \quad  q, r \in (p,a), \, q > 2
\end{equation}

\noindent Then by the above we have $N(x) = N_1(x) + N_1'(x) + N_1''(x) + O(x^{1/2+\veps})$.

Let us work with $N_1(x)$. 
Since $q>2$, then $a$ and $q$ are necessarily of the same parity. 
Define $\P_{!2} = \P\setminus \{2\}$ to be the set of all odd prime numbers.
We change the variables to $\ell_1=(q-a)/2$, $\ell_2=(q+a)/2$ to obtain
\begin{equation} \label{n2def}
	N_1(x)=N_2(x)+O(x^{1/2+\veps}),
\end{equation}

\noindent where $N_2(x)$ is the number of  quadruples $(\ell_1,\ell_2,p,r)$ such that $2<r<p \le \sqrt{x}$ and
\begin{equation}\label{n2form}
	(\ell_2-\ell_1)^2+p^2 \le x, \quad \ell_1 < \ell_2, \quad \ell_1+\ell_2 \in \P_{!2}, \quad 4\ell_1\ell_2 = p^2-r^2. 
\end{equation}

\noindent Let $\Pc(x)$ be the set of $(p,r) \in \P^2$ such that $2<r<p \le \sqrt{x}$ and
\begin{equation}
	 r \overset{(1)} {\le} \frac{\sqrt{x}}{\log^{10}x} \quad\text{or}\quad p-r \overset{(2)} {<} \frac{\sqrt{x}}{\log^{10}x} \quad\text{or}\quad \sqrt{x}-\frac{\sqrt{x}}{\log^{10}x} \overset{(3)} {<} p.
\end{equation}

\noindent Then trivially $\#\Pc(x) \le \#\Pc_1(x)+\#\Pc_2(x)+\#\Pc_3(x)$, where $\Pc_i(x)$ is the set of $(p,r) \in \P^2$ with $2<r<p \le \sqrt{x}$ and the corresponding condition $(i)$.
Further, on applying Brun-Titchmarsh inequality we get
\begin{equation}
	\begin{split}
		\#\Pc_1(x) &\ll \pi(\sqrt{x}) \pi\biggl(\frac{\sqrt{x}}{\log^{10}x}\biggr) \ll \frac{x}{\log^{12}x},\\
		\#\Pc_2(x) &\ll \sum_{r \le \sqrt{x}} \sum_{\bfrac{p>r}{p-r<\sqrt{x}/{\log^{10}x}}} 1 \ll \frac{\sqrt{x}}{\log^{10}x} \sum_{r \le \sqrt{x}} \frac{1}{\log r} \ll  \frac{x}{\log^{12}x},\\
		\#\Pc_3(x) &\ll \pi(\sqrt{x}) \sum_{\bfrac{p \le \sqrt{x}}{p>\sqrt{x}-\sqrt{x}/\log^{10}x}}1 \ll \frac{x}{\log^{12}x}.
	\end{split}
\end{equation}

\noindent Hence, $\# \Pc (x) \ll x/\log^{12}x$. 
Using Cauchy-Schwarz inequality we deduce that the number of quadruples $(\ell_1,\ell_2,p,r)$ coming to $N_2(x)$ from $(p,r) \in \Pc(x)$ is bounded by

\begin{equation}
	\begin{split}
		\sum_{(p,r) \in \Pc(x)} \tau(p^2-r^2) &\ll \#\Pc^{1/2} \biggl(\sum_{r<p \le \sqrt{x}}\tau^2(p^2-r^2)\biggr)^{1/2} \\
		&\ll \frac{\sqrt{x}}{\log^6 x} \biggl(\sum_{d_1,d_2 \le 2 \sqrt{x}}\tau^2(d_1d_2)\biggr)^{1/2} \\
		&\ll \frac{\sqrt{x}}{\log^6 x} \sum_{d \le 2 \sqrt{x}}\tau^2(d) \ll \frac{x}{\log^3 x}.
	\end{split}
\end{equation}

\noindent Inserting it into \eqref{n2def} we obtain

\begin{equation} \label{N1N3}
	N_1(x) = N_3(x)+O \biggl( \frac{x}{\log^3 x}\biggr),
\end{equation}
where $N_3(x)$ counts the number of quadruples $(\ell_1,\ell_2,p,r)$ in $N_2(x)$ that satisfy $(p,r) \notin \Pc(x)$, i. e.

\begin{equation} \label{prcond}
	(p,r) \notin \Pc(x) \iff \frac{\sqrt{x}}{\log^{10}x} <r \le p-\frac{\sqrt{x}}{\log^{10}x} , \quad p \le \sqrt{x}-\frac{\sqrt{x}}{\log^{10}x}.
\end{equation}

\noindent Notice that by the symmetry of the equation we can exchange $r$ to $q$ in \eqref{main_r_1r_2} to get that the contribution of $(p,q) \in \Pc(x)$ is bounded by $x/\log^3 x$ as well. 
Hence, we have

\begin{equation}\label{N1N4}
	N_1(x) = N_4(x)+O \biggl( \frac{x}{\log^3 x}\biggr),
\end{equation}

\noindent where $N_4(x)$ counts the number of quadruples $(a,p,q,r)$ such that 
\begin{equation}
	a^2+p^2 = q^2+r^2 \le x, \quad 2 < a < q < r < p, \quad (p,r) \notin \Pc(x), \quad (p,q) \notin \Pc(x).
\end{equation}

Next, as in Rieger \cite{MR0229603} we transform our question into lattice counting problem.
Consider an equation 
\begin{equation}
	x_1^2+x_2^2=x_3^2+x_4^2 \le x, \quad x_i \in \mathbb{Z}_{>0}, \quad x_2 < x_1 < x_3.
\end{equation}

\noindent Notice that the condition $x_1 < x_3$ immediately implies $x_2>x_4$, hence our variables are ordered as $2 < x_4 < x_2 < x_1 < x_3$.

Further, on defining $2m_1=x_3-x_1 \ne 0$, $2m_2=x_2-x_4 \ne 0$ our equation becomes $2m_1(x_3+x_1)=2m_2(x_2+x_4)$.

\noindent Now let $d = (m_1,m_2)$ and $m_1=n_1d$, $m_2=n_2d$, $(n_1,n_2)=1$.
Since $n_1$ and $n_2$ are coprime it follows that $n_1$ divides $x_2+x_4$ and similarly $n_2$ divides $x_1+x_3$.
Denote $x_2+x_4=2n_1t$. Then we have $x_1+x_3=2n_2t$.
This implies that $x_i$ can be parametrized as 
\begin{eqnarray*}
	&x_1=n_2t-n_1d, &x_2=n_2d+n_1t,\\
	&x_3=n_1d+n_2t, &x_4=n_1t-n_2d.
\end{eqnarray*}
Thus, taking in the above 
\begin{equation}
	(x_1,x_2,x_3,x_4) = (r,q,p,a)
\end{equation}

\noindent we conclude that $N_4(x)$ is equal to the number of $(d,t,n_1,n_2) \in \mathbb{Z}_{>0}^4$ satisfying
\begin{equation}
	n_2t-n_1d, n_2d+n_1t, n_1d+n_2t \in \mathbb{P}; (n_1,n_2)=(d,t)=1; n_it \pm n_jd \le \sqrt{x}.
\end{equation}

\noindent Here in changing the variables we used essentially the fact that we have $a<q$ in \eqref{main_r_1r_2}, hence the corresponding form in $t,d$ has the minus sign.

Notice that by the above for any prime $s$ we have that $d, t \not \equiv 0 (\mmod s)$ simultaneously.

Let now $t, d$ be fixed with $(d,t)=1$, $u = \sqrt{x} \ge 2$ and $i \in \{1,2,3\}$. 
Denote for simplicity 
\begin{equation}
	\begin{split}
		&f_1=f_1(n_1,n_2)=n_2t-n_1d, \\
		&f_2=f_2(n_1,n_2)=n_2d+n_1t, \\
		&f_3=f_3(n_1,n_2)=n_1d+n_2t,\\
		&F(n_1,n_2) = \prod_{i=1}^{3}f_i(n_1,n_2).
	\end{split}
\end{equation}

\noindent Define 
\begin{equation}
	A^{t,d}(u) = \{ F(n_1,n_2) \colon f_i \le u, (n_1,n_2)=1 \}.
\end{equation}

\noindent Let $z \ge 2$ and define $P(z) = \prod_{p < z}p$.
Then
\begin{equation} \label{saz}
	S(A^{t,d}(u), z) = \sum_{\bfrac{n \in A^{t,d}(u)}{(n,P(z))=1}}1=\sum_{n \in A^{t,d}(u)} \sum_{\delta \mid (n,P(z))}\mu(\delta)=\sum_{\delta \mid P(z)}\mu(\delta) \sum_{\bfrac{n \in A^{t,d}(u) }{n \equiv 0 (\mmod \delta)}}1.
\end{equation}

\noindent Then the number of $(a,p,q,r) \in \N \times \P^3$ subject to \eqref{main_r_1r_2} is bounded by
\begin{equation} \label{n4sieve}
	N_4(x) \le \sum_{\bfrac{t,d \le u}{(t,d)=1}} S(A^{t,d}(u), z) + \frac{z}{\log z} u \max_{n \ll u^2}\tau(u),
\end{equation}
where the second summand corresponds to the number of solutions to $x_1^2+x_2^2 = x_3^2+x_4^2$, where $x_1 \le z$ is a prime and $x_2 \le u$ is an integer.

\noindent Denote the internal sum in \eqref{saz} by $A^{t,d}_{\delta}(u)$. 
We have 
\begin{equation}
	A^{t,d}_{\delta}(u) = \# \{ n_1,n_2  \colon F(n_1,n_2) \equiv 0 (\mmod \delta), f_i \le u\}.
\end{equation}

\noindent Define $\nu(\delta) = \# \{ n_1, n_2 (\mmod \delta) \colon  F(n_1,n_2) \equiv 0 (\mmod \delta)\}$.
By the Chinese remainder theorem the function $\nu(\delta)$ is multiplicative.
Since $f_i \le u$ and $f_2 < f_1 < f_3 = n_1d+n_2t \le u$, one has $n_1 \le u/d$, $n_2 \le u/t$.

\noindent We thus have
\begin{equation} \label{adeltamain}
	\begin{split}
		A^{t,d}_{\delta}(u) &= \frac{\nu(\delta)}{\delta^2} \frac{u^2}{td} + O \biggl(  \frac{\nu(\delta)}{\delta^2} \frac{u}{\min(t,d)} \biggr).
	\end{split}
\end{equation}

Let us compute $\nu(p)$ for different values of $p$. 
Notice that $p$ can not simultaneously divide $t,d$ since $(t,d)=1$. 

Consider first the case $td \equiv 0 (\mmod p)$, $p \neq 2$. 
Then $f_i \equiv 0 (\mmod p)$ has $p$ solutions.
Suppose that $t \equiv 0 (\mmod p)$, $d \not \equiv 0 (\mmod p)$, the opposite case can be treated analogously.
Then $f_1, f_2 \equiv 0 (\mmod p)$, $f_2, f_3 \equiv 0 (\mmod p)$ and $f_1,f_2,f_3 \equiv 0 (\mmod p)$ have each one solution $n_1 \equiv n_2 \equiv 0(\mmod p)$. 
Finally $f_1,f_3 \equiv 0(\mmod p)$ is equivalent to $\pm n_1 \equiv 0(\mmod p)$ and has $p$ solutions.
Thus $\nu(p) = 3p-2-p+1 = 2p - 1$.

Next, let $p \neq 2$ and $t,d \not \equiv 0 (\mmod p)$.
Then the congruence $f_i \equiv 0 (\mmod p)$ has $p$ solutions.
Next $f_1,f_3 \equiv 0 (\mmod p)$ and $f_1,f_2,f_3 \equiv 0(\mmod p)$ both have one solution, namely $n_1 \equiv n_2 \equiv 0(\mmod p)$.
The remaining cases $f_1, f_2 \equiv 0 (\mmod p)$ and $f_2,f_3 \equiv 0 (\mmod p)$ are slightly more complicated.
Consider $f_1, f_2 \equiv  0 (\mmod p)$, the other one is analogous.
From $f_1 \equiv 0 (\mmod p)$ we obtain $n_2 \equiv n_1 d t^{-1} (\mmod p)$ and inserting it into $f_2 \equiv 0 (\mmod p)$ one concludes $n_1 (d^2 t^{-1} +t) \equiv 0 (\mmod p)$.
Further, if $d^2 t^{-1} + t \equiv 0 (\mmod p)$, then we have $p$ solutions and hence $\nu(p) = 2p-1$, while if $d^2 t^{-1} +t  \not \equiv 0 (\mmod p)$, then we have only one solution, namely, $n_1 \equiv n_2 \equiv 0 (\mmod p)$ and hence $\nu(p) = 3p - 2$.

We now move to $p=2$. 
Let $t,d \not \equiv 0 (\mmod 2)$.
Then $t,d \equiv 1 (\mmod 2)$ and $f_i \equiv 0 (\mmod 2)$ has each $2$ solutions, namely, $n_1 \equiv \pm n_2 (\mmod 2)$.
Same holds for $f_i, f_j \equiv 0 (\mmod 2)$ and $f_1,f_2,f_3 \equiv 0 (\mmod 2)$.
Thus $\nu(p) = 2$ in that case. 
Consider now $td \equiv 0 (\mmod 2)$.
Assume $t \equiv 0 (\mmod 2)$ and $d \equiv 1 (\mmod 2)$, the other case is analogous.
Then $f_i \equiv 0 (\mmod 2)$ has each $2$ solutions, namely, $n_i \equiv 0(\mmod 2)$, $n_j$ any; 
$f_1, f_2 \equiv 0 (\mmod 2)$, $f_2, f_3 \equiv 0 (\mmod 2)$ and $f_1, f_2, f_3 \equiv 0 (\mmod 2)$ have one solution $n_1 \equiv n_2 \equiv 0 (\mmod 2)$, 
while $f_1,f_3 \equiv 0(\mmod 2)$ has two solutions, namely, $n_1 \equiv 0 (\mmod d)$, $n_2$ any.
Hence in that case we get $\nu(p) = 3$.

Finally, we conclude
\begin{equation}
	\nu(p) = \begin{cases}
						3p-2, & p \not = 2; t,d \not \equiv 0 (\mmod p), d \not \equiv \pm it (\mmod p), d \not \equiv \pm t (\mmod p),\\
						2p-1, & p \not = 2; t,d \not \equiv 0 (\mmod p), d \equiv \pm it (\mmod p), d \not \equiv \pm t (\mmod p),\\
						2p-1, & p \not = 2; t,d \not \equiv 0 (\mmod p), d \not \equiv \pm it (\mmod p), d \equiv \pm t (\mmod p),\\
						2p-1, & p \not = 2; td \equiv 0 (\mmod p),\\
						3, & p = 2; td \equiv 0 (\mmod p),\\
						2, & p = 2; t,d \not \equiv 0 (\mmod p),
					\end{cases}
\end{equation}

\noindent where $i$ is defined as $i^2 \equiv -1(\mmod p)$.
Summing \eqref{adeltamain} over $t,d$ we get
\begin{equation} \label{sumovertd}
	\begin{split}
		\sum_{\bfrac{t,d \le u}{(t,d)=1}}A^{t,d}_{\delta}(u) &= u^2  \sum_{\bfrac{t,d \le u}{(t,d)=1}} \frac{1}{td} \sum_{\delta \mid P(z)} \frac{\mu(\delta) \nu(\delta)}{\delta^2}+ O \biggl( \sum_{\bfrac{t,d \le u}{(t,d)=1}} \sum_{\delta \mid P(z)} \frac{\nu(\delta)}{\delta^2} \frac{u}{\min(t,d)} \biggr).
	\end{split}
\end{equation}

\noindent In the main term we have

\begin{equation}
	 	\sum_{\delta \mid P(z)} \frac{\mu(\delta) \nu(\delta)}{\delta^2} = \prod_{p < z} \biggl(1-\frac{\nu(p)}{p^2} \biggr) = G(t,d) V(z),
\end{equation}

\noindent where
\begin{equation}
	\begin{split}
		V(z) &= \prod_{2 < p < z} \biggl(1-\frac{3p-2}{p^2} \biggr),\\ 
		G(t,d) &= \biggl(1-\frac{\nu(2)}{4} \biggr) \prod_{\substack {2 < p <z \\ d \equiv \pm i t (\mmod p) \\ \text{or} \\ d \equiv \pm t (\mmod p) }} \biggl(1+\frac{p-1}{p^2-3p+2} \biggr).
	\end{split}
\end{equation}

\noindent By Mertens' theorem we have
\begin{equation} \label{mertensvz}
	V(z) = \prod_{2 < p < z} \biggl(1 - \frac{3}{p} + O \biggl(\frac{1}{p^2}\biggr)\biggr) = \frac{C+o(1)}{\log^3 z}.
\end{equation}

\noindent Thus \eqref{sumovertd} becomes
\begin{equation}
	\sum_{\bfrac{t,d \le u}{(t,d)=1}}A^{t,d}_{\delta}(u) = u^2 V(z) \sum_{\bfrac{t,d \le u}{(t,d)=1}} \frac{G(t,d)}{td} + O \biggl( \sum_{\bfrac{t,d \le u}{(t,d)=1}} \sum_{\delta \mid P(z)} \frac{\nu(\delta)}{\delta} \frac{u}{\min(t,d)} \biggr).
\end{equation}

We now use that $(p,r) \notin \Pc(x)$, see \eqref{prcond}.
Recall that $x_3-x_1 = 2n_1d = p-r \ge \sqrt{x} / \log^{10}x$ by \eqref{prcond}.
Also $x_2+x_4 = 2n_1t = a+q \le 2\sqrt{x}$.
Thus $2d/t \ge 1/\log^{10}x$.
Similarly $x_1+x_3 = 2n_2 t = p+r \ge 3\sqrt{x}/\log^{10}x$, $x_2-x_4 = 2n_2d < \sqrt{x}$ imply $t/d \ge 3/\log^{10}x$.

Consider first $d \not \equiv \pm t (\mmod p)$,  $d \not \equiv \pm i t (\mmod p)$ for any $p < z$.
Then $G(t,d) = 1$ and we have for $d \le t$
\begin{equation}
	\sum_{\bfrac{t,d \le u}{(t,d)=1}} \frac{G(t,d)}{td} = \sum_{t \le u} \frac{1}{t} \sum_{\bfrac{t/(2\log^{10}x) \le d \le t}{(d,t)=1}} \frac{1}{d} \ll \log u (\log \log x),
\end{equation}
while for $d \ge t$ we similarly have
\begin{equation}
	\sum_{\bfrac{t,d \le u}{(t,d)=1}} \frac{G(t,d)}{td} = \sum_{d \le u} \frac{1}{d} \sum_{\bfrac{3d/\log^{10}x \le t \le d}{(d,t)=1}} \frac{1}{t} \ll \log u (\log \log x).
\end{equation}

\noindent Further, on defining
\begin{equation}
	\Psi(n) = n \prod_{p \mid n} \biggl(1+\frac{1}{p}\biggr)
\end{equation}
we have that $G(t,d) \ll \Psi(z)/z$ (in fact, this bound holds for all $t,d$) and thus in the remaining cases
\begin{equation} \label{Gtdbound}
	\sum_{\bfrac{t,d \le u}{(t,d)=1}} \frac{G(t,d)}{td} \ll \frac{\Psi(z)}{z} \log u (\log \log x) \ll \log u (\log \log x) (\log \log z),
\end{equation}
where we used that $\Psi(n)\varphi(n) < n^2$ and hence $\Psi(n)/n < n /\varphi(n) \ll \log \log n$.

\noindent Acting similarly in the error term of \eqref{sumovertd} we obtain that the term in the big-$O$ satisfies
\begin{equation}
	\begin{split}
		\sum_{\bfrac{t,d \le u}{(t,d)=1}} \sum_{\delta \mid P(z)} \frac{\nu(\delta)}{\delta} \frac{u}{\min(t,d)} &= u V(z) \sum_{\bfrac{t,d \le u}{(t,d)=1}} \frac{G(t,d)}{\min(t,d)} \le u^2 V(z) \sum_{\bfrac{t,d \le u}{(t,d)=1}} \frac{G(t,d)}{td} \\
		&\ll \frac{u^2}{\log^3 z} \log u (\log \log x)(\log \log z),
	\end{split}
\end{equation} 
where we applied \eqref{Gtdbound} and \eqref{mertensvz}.
Taking $z=x^{\alpha}$ for $\alpha$ small enough we conclude
\begin{equation}
	\sum_{\bfrac{t,d \le u}{(t,d)=1}}A^{t,d}_{\delta}(u) \ll \frac{x}{\log^2 x} (\log \log x)^2,
\end{equation}
where we bounded $(z/\log z) u \max_{n \ll u^2} \tau(n) \ll x^{1/2+\veps}/\log x \ll x/\log^2 x$.
\noindent What remains is to estimate $N_1'(x)$, $N_1''(x)$, see \eqref{main2_r_1r_2}.
This goes completely analogous, we change variables to $\ell_1 = (a-q)/2$, $\ell_2=(a+q)/2$, then bound away the contribution of $(r,p) \in \Pc(x)$ and estimate $N_4'(x)$ given as
\begin{equation}
	N_4'(x) = \#\{ a^2+p^2 = q^2+r^2 \le x, \quad q > 2, \, a,p \in (q,r), \quad (r,p) \notin \Pc(x) \}
\end{equation}
and similarly for $N_4''(x)$ with the contribution of $(q,p) \in \Pc(x)$ taken away.
Finally we conclude
\begin{equation}
	N_4(x) \ll  \frac{x}{\log^2 x} (\log \log x)^2.
\end{equation}

\bibliographystyle{abbrv}

\bibliography{bibliography}{}

\begin{thebibliography}{10}

\bibitem{MR2418801}
V.~Blomer and J.~Br\"{u}dern.
\newblock Prime paucity for sums of two squares.
\newblock {\em Bull. Lond. Math. Soc.}, 40(3):457--462, 2008.

\bibitem{MR3060456}
J.~Br\"{u}dern.
\newblock Daniel's twists of {H}ooley's delta function.
\newblock In {\em Contributions in analytic and algebraic number theory},
  volume~9 of {\em Springer Proc. Math.}, pages 31--82. Springer, New York,
  2012.

\bibitem{MR1833070}
S.~Daniel.
\newblock On the sum of a square and a square of a prime.
\newblock {\em Math. Proc. Cambridge Philos. Soc.}, 131(1):1--22, 2001.

\bibitem{erdos}
P.~Erd\H{o}s.
\newblock On additive properties of squares of primes: {I}.
\newblock {\em Proc. Acad. Wed. Amsterdam}, 41(2):37--41, 1938.

\bibitem{MR0088512}
C.~Hooley.
\newblock On the representation of a number as the sum of two squares and a
  prime.
\newblock {\em Acta Math.}, 97:189--210, 1957.

\bibitem{MR623719}
C.~Hooley.
\newblock On another sieve method and the numbers that are a sum of two {$h$}th
  powers.
\newblock {\em Proc. London Math. Soc. (3)}, 43(1):73--109, 1981.

\bibitem{MR3763074}
C.~Hooley.
\newblock On the representation of a number as the sum of a prime and two
  squares of square-free numbers.
\newblock {\em Acta Arith.}, 182(3):201--229, 2018.

\bibitem{MR1199067}
M.~N. Huxley.
\newblock Exponential sums and lattice points. {II}.
\newblock {\em Proc. London Math. Soc. (3)}, 66(2):279--301, 1993.

\bibitem{MR2061214}
H.~Iwaniec and E.~Kowalski.
\newblock {\em Analytic number theory}, volume~53 of {\em American Mathematical
  Society Colloquium Publications}.
\newblock American Mathematical Society, Providence, RI, 2004.

\bibitem{landau}
E.~Landau.
\newblock \"{U}ber die einteilung der positiven ganzen zahlen in vier klassen
  nach der mindestzahl der zu ihrer additiven zusammensetzung erforderlichen
  quadrate.
\newblock {\em Arch. Math. Phys.}, 13(3):305--312, 1908.

\bibitem{MR0229600}
B.~V. Levin and A.~S. Fa{\u\i}nle{\u\i}b.
\newblock Application of certain integral equations to questions of the theory
  of numbers.
\newblock {\em Uspehi Mat. Nauk}, 22(3 (135)):119--197, 1967.

\bibitem{MR4054578}
J.~Li.
\newblock A binary quadratic {T}itchmarsh divisor problem.
\newblock {\em Acta Arith.}, 192(4):341--361, 2020.

\bibitem{MR616225}
V.~A. Plaksin.
\newblock Asymptotic formula for the number of solutions of an equation with
  primes.
\newblock {\em Izv. Akad. Nauk SSSR Ser. Mat.}, 45(2):321--397, 463, 1981.

\bibitem{MR0087685}
K.~Prachar.
\newblock {\em Primzahlverteilung}.
\newblock Springer-Verlag, Berlin-G\"{o}ttingen-Heidelberg, 1957.

\bibitem{MR2493924}
O.~Ramar\'e.
\newblock {\em Arithmetical aspects of the large sieve inequality}, volume~1 of
  {\em Harish-Chandra Research Institute Lecture Notes}.
\newblock Hindustan Book Agency, New Delhi, 2009.
\newblock With the collaboration of D. S. Ramana.

\bibitem{MR0229603}
G.~J. Rieger.
\newblock \"{U}ber die {S}umme aus einem {Q}uadrat und einem {P}rimzahlquadrat.
\newblock {\em J. Reine Angew. Math.}, 231:89--100, 1968.

\bibitem{MR0190056}
I.~M. Vinogradov.
\newblock {\em {Osnovy teorii chisel}}.
\newblock Eighth revised edition. Izdat. "Nauka'', Moscow, 1972.

\end{thebibliography}
\end{document}